\newtheorem{theorem}{Theorem}[section]
\newtheorem{lemma}[theorem]{Lemma}
\newtheorem{corollary}[theorem]{Corollary}
\theoremstyle{definition}
\newtheorem{remark}[theorem]{Remark}
\numberwithin{equation}{section}
\def\Ric{{\operatorname{Ric}}} \def\Aut{{\operatorname{Aut}}}
\def\End{{\operatorname{End}}} \def\Hess{{\operatorname{Hess}}}
\def\II{{\operatorname{II}}} \def\II{{\operatorname{II}}}
\newcommand\1{\hbox{\kern.375em\vrule height1.57ex depth-.1ex
    width.05em\kern-.375em \rm 1}}
 \newcommand\E{\mathbb{E}}
 \newcommand\R{\mathbb{R}}
\renewcommand\P{\mathbb{P}} 
 \newcommand\varK{\mathbb{K}}
\def\mathpal#1{\mathop{\mathchoice{\text{\rm #1}}%
    {\text{\rm #1}}{\text{\rm #1}}%
    {\text{\rm #1}}}\nolimits} \def\id{{\mathpal{id}}}
\def\OM{\mathpal{O}(M)} \def\OtM{{\mathpal{O}}_t(M)}
\def\FM{\mathpal{F}(M)} \def\FM{F(M)} \def\Gl{\mathpal{GL}}
\def\partr#1#2{/\!/_{\!#1,#2}^{\phantom{.}}}
\def\partrinv#1#2{/\!/_{\!#1,#2}^{-1}}
 \def\vd{\mathrm{d}} \def\r{\right}
\def\l{\left} \def\e{\operatorname{e}} \def\dsum{\displaystyle\sum}
  \def\dlim{\displaystyle\lim}
\begin{document}

  \title[Characterization of pinched Ricci curvature]
  {Characterization of pinched Ricci curvature\\ by functional
    inequalities}

  \author{Li-Juan Cheng\textsuperscript{1,2} and Anton
    Thalmaier\textsuperscript{1}}

  \address{\textsuperscript{1}Mathematics Research Unit, FSTC, University of Luxembourg\\
    6, rue Richard Coudenhove-Kalergi, 1359 Luxembourg, Grand Duchy of
    Luxembourg}
  \address{\textsuperscript{2}Department of Applied Mathematics, Zhejiang University of Technology\\
    Hangzhou 310023, The People's Republic of China}
  \email{lijuan.cheng@uni.lu \text{\rm and} chenglj@zjut.edu.cn}
  \email{anton.thalmaier@uni.lu}

\begin{abstract}
  In this article, functional inequalities for diffusion semigroups on
  Riemannian manifolds (possibly with boundary) are established, which
  are equivalent to pinched Ricci curvature, along with gradient
  estimates, $L^p$-inequalities and log-Sobolev inequalities. These
  results are further extended to differential manifolds carrying
  geometric flows. As application, it is shown that they can be used
  in particular to characterize general geometric flow and Ricci flow by
  functional inequalities.
\end{abstract}

\keywords{Curvature, gradient estimate, log-Sobolev inequality,
  evolving manifold, Ricci flow} \subjclass[2010]{60J60, 58J65, 53C44}
\date{\today}

\maketitle

\section{Introduction}\label{Sect:1}
Let $(M,g)$ be a $d$-dimensional Riemannian manifold, possibly with
boundary. Let $\nabla$ and $\Delta$ be the Levi-Civita connection and
the Laplacian associated with the Riemannian metric $g$,
respectively. For a given $C^{1}$-vector field~$Z$ on $M$ and tangent
vectors $X, Y$ on $M$, let
\begin{align*}
  \Ric^{Z}(X,Y):=\Ric(X,Y)-\l<\nabla_XZ, Y\r>,
\end{align*}
where $\Ric$ is the Ricci curvature tensor with respect to $g$ and
$\l<\cdot,\cdot\r>:=g(\cdot,\cdot)$.  We denote by $C(M)$, $C_b(M)$,
$C^\infty(M)$ and $C_0^\infty(M)$ the sets of continuous functions, 
bounded continuous functions, smooth functions, 
smooth test functions on $M$, respectively.

Given a $C^{1}$-vector field $Z$ on $M$, we consider the elliptic
operator $L:=\Delta+Z$.  Let $X_t^x$ be a diffusion process starting
from $X^x_0=x$ with generator $L$, called a $L$-diffusion process.  We
assume that $X_t^x$ is non-explosive for each $x\in M$.  Let
$B_t=(B_t^1,\ldots,B_t^d)$ be a $\R^d$-valued Brownian motion on a
complete filtered probability space
$(\Omega, \{\mathscr{F}_t\}_{t\geq 0},\P)$ with the natural filtration
$\{\mathscr{F}_t\}_{t\geq 0}$. The $L$-diffusion process $X^x_t$
starting from $x$ can be constructed as a solution to the Stratonovich
equation
\begin{align}\label{SDE-X}
  \vd X^x_t=\sqrt{2}u^x_t\circ \vd B_t+Z(X^x_t)\,
  \vd t,\quad X^x_0=x,
\end{align}
where $u^x_t$ is the horizontal process of $X^x_t$ taking values in
the orthonormal frame bundle $\OM$ over $M$ such that
$\pi(u^x_0)=x$. Note that
$$\partr st:=u^x_t\circ (u^x_s)^{-1}\colon{T_{X^x_s}M}\to{T_{X^x_t}M},\quad s\leq t,$$
defines parallel transport along the paths $r\mapsto X^x_r$.  By
convention, an orthonormal frame $u\in\OM$ is interpreted as isometry
$u\colon\R^d\to T_xM$ where $\pi(u)=x$.  Note that parallel transport
$\partr st$ is independent of the choice of the initial frame $u^x_0$
above $x$.

The diffusion process $X^x_t$ gives rise to a Markov semigroup $P_t$
with infinitesimal generator $L$: for $f\in C_b(M)$, we have
$$P_tf(x)=\E [f(X_t^x)],\quad t\geq0,$$
where $\E$ stands for expectation with respect to the underlying
probability measure $\P$.

The problem of characterizing boundedness of $\Ric^Z$ from below in
terms of gradient estimates and other functional inequalities for the
semigroup $P_t$, has been thoroughly studied in the literature,
e.g.~\cite{H02,Wbook1,Wbook2}.  For instance, it is well-known that
the curvature condition
$$\Ric^Z(X,X)\geq \kappa|X|^2, \quad X\in TM,$$
is equivalent to each of the following inequalities:
\begin{enumerate}[ 1)]
\item (gradient estimate) for all $f\in C_0^{\infty}(M)$,
  $$|\nabla P_tf|^2\leq \e^{-2\kappa t}P_t|\nabla f|^2;$$
\item (Poincar\'e inequality) for all $p\in (1,2]$ and
  $f\in C_0^{\infty}(M)$,
  $$\frac{p}{4(p-1)}(P_tf^2-(P_tf^{2/p})^p)\leq \frac{1-\e^{-2\kappa
      t}}{2\kappa}P_t|\nabla f|^2;$$
\item (log-Sobolev inequality) for all $f\in C_0^{\infty}(M)$,
  $$P_t(f^2\log f^2)-P_tf^2\log P_tf^2\leq \frac{2(1-\e^{-2\kappa
      t})}{\kappa}P_t|\nabla f|^2.$$
\end{enumerate}
However, the question how to use functional inequalities for $P_t$
to characterize upper bounds on $\Ric^Z$ is much more delicate.  When
it comes to stochastic analysis on path space, there is a lot of
former work based on bounds of $\Ric^Z$, see
e.g. \cite{CHL,CW,Dri92,Hsu94}.  Recently, A.~Naber \cite{Naber} and
R.~Haslhofer and A.~Naber \cite{HN1} have been able to establish
gradient inequalities on path space which characterize boundedness of $\Ric^Z$;
F.-Y.~Wang and B.~Wu \cite{WW} extended these results to manifolds
with boundary, where $\Ric^Z$ may also vary along the manifold and may
be unbounded.

Let us briefly describe R.~Haslhofer and A.~Naber's work. Among other
things, they prove that the functional inequality,
\begin{equation}\label{Eq:Naber-Haslhofer}
  |\nabla \E F(X_{[0,T]})|^2\leq \e^{\kappa T}\E\l[|D_0^{/\!/}F|^2
  +\kappa \int_0^T \e^{\kappa(r-T)}|D_{r}^{/\!/}F|^2\,\vd r\r],
  \quad F\in \mathcal{F}C_0^{\infty},
\end{equation}
is equivalent to the curvature condition $|\Ric^Z|\leq \kappa$ for
some nonnegative constant $\kappa$, where
$$\mathcal{F}C_0^{\infty}:=\left\{f(X_{t_1},\ldots, X_{t_N})\colon\
  0\leq t_1<\ldots<t_N\leq T,\ f\in C_0^{\infty}(M^{N})\right\}$$ and
$$D_t^{/\!/}F(X_{[0,T]}):=\sum_{i=1}^{N}\1_{\{t\leq t_i\}}\,\partrinv t{t_i}\nabla_iF(X_{[0,T]}),
\quad F\in \mathcal{F}C_0^{\infty}.$$
In their proof, in order to show that gradient estimate
\eqref{Eq:Naber-Haslhofer} above implies $|\Ric^Z|\leq \kappa$, they
show that it is sufficient to consider 2-point cylindrical functions
of the special type
$$F(X_{[0,T]})=f(x)-\frac{1}{2}f(X_t)$$
as test functional.  From this observation, it is easy to see that the
subsequent items (i) and (ii) are equivalent:
\begin{itemize}
\item [(i)] $|\Ric^Z|\leq \kappa$ for $\kappa\geq 0$;
\item [(ii)] for $f\in C_0^{\infty}(M)$ and $t>0$,
  \begin{align*}
    & |\nabla P_tf|^2\leq \e^{2\kappa t}P_t|\nabla f|^2\quad\text{and}\\
    & \l|\nabla f(x)-\frac{1}{2}\nabla P_{t}f\r|^2\leq \e^{\kappa t}\E\l[\l|\nabla f-\frac{1}{2}\partrinv0t\nabla f(X_t)\r|^2+\frac{1}{4}\left(\e^{\kappa t}-1\right)|\nabla f(X_t)|^2\r].
  \end{align*}
\end{itemize}
Note that the inequalities in (ii) can be combined to the single
condition:
$$|\nabla P_tf|^2-\e^{2\kappa t}P_t|\nabla f|^2\leq 4\l((\e^{\kappa t}-1)|\nabla f|^2+\l<\nabla f, \nabla P_tf\r>-\big<\nabla f, \e^{\kappa t}\E[ \partrinv0t\nabla f(X_t)]\big>\r)\wedge 0.$$

The discussion above gives rise to a natural question: \emph{Are there
  gradient inequalities on $M$ which allow to characterize pinched
  curvature with arbitrary upper and lower bounds?}

Our paper is organized as follows. In Section~\ref{Sect:2} we give a
positive answer to the question above.  In Section~\ref{Sect:3}, we
extend these results to characterize simultaneous bounds on $\Ric^Z$
and $\II$ on Riemannian manifolds with boundary, where the curvature
bounds are not given by constants, but may vary over the manifold.  In
Section~\ref{Sect:4} finally, we present gradient and functional
inequalities for the time-inhomogeneous semigroup $P_{s,t}$ on
manifolds carrying a geometric flow.  We show that these inequalities
can be used to characterize solutions to some geometric flows,
including Ricci flow.

\section{Characterizations for Ricci curvature}\label{Sect:2}
We start the section by introducing our main results.

\begin{theorem}\label{th1}
  Let $(M,g)$ be a complete Riemannian manifold. Let $k_1, k_2$ be two
  real constants such that $k_1\leq k_2$.  The following conditions
  are equivalent:
  \begin{enumerate}[\rm(i)]
  \item[\rm(i)] $k_1\leq \Ric^Z\leq k_2$;
  \item[\rm(ii)] for $f\in C_0^{\infty}(M)$ and $t>0$,
 $$\quad|\nabla P_tf|^2-\e^{-2k_1t}P_t|\nabla f|^2\leq 4\l[\left(\e^{\frac{k_2-k_1}{2}t}-1\right)|\nabla f|^2+\l<\nabla f, \nabla P_tf\r>-\e^{-k_1t}\E\big<\nabla f, \partrinv0t\nabla f(X_t)\big>\r]\wedge 0;$$
\item [\rm(ii')] for $f\in C_0^{\infty}(M)$ and $t>0$,
 $$|\nabla P_tf|^2-\e^{-2k_1t}P_t|\nabla f|^2\leq 4\l(\e^{\frac{k_2-k_1}{2}t}|\nabla P_tf|^2-\e^{-k_1t}\E\big<\nabla P_tf, \partrinv0t\nabla f(X_t)\big>\r)\wedge 0;$$
\item [\rm(iii)] for $f\in C_0^{\infty}(M)$, $p\in (1,2]$ and $t>0$,
  \begin{align*}
    & \frac{p(P_tf^2-(P_tf^{2/p})^p)}{4(p-1)}-\frac{1-\e^{-2k_1t}}{2k_1}P_t|\nabla f|^2\\
    &\quad\leq 4\int_0^t\l(\e^{\frac{k_2-k_1}{2}(t-r)}-1\r)P_r|\nabla f|^2+\E\big<\nabla f(X_r),\nabla P_{t-r}f(X_r)-\e^{-k_1(t-r)}\partrinv rt\nabla f(X_t)\big>\,\vd r\wedge 0;
  \end{align*}
\item [\rm(iii')] for $f\in C_0^{\infty}(M)$, $p\in (1,2]$ and $t>0$,
  \begin{align*}
    & \frac{p(P_tf^2-(P_tf^{2/p})^p)}{4(p-1)}-\frac{1-\e^{-2k_1t}}{2k_1}P_t|\nabla f|^2\\
    &\quad\leq 4\int_0^t\e^{\frac{k_2-k_1}{2}(t-r)}P_r|\nabla P_{t-r}f|^2-\e^{-k_1(t-r)}\E\l<\nabla f(X_r),\partrinv rt\nabla f(X_t)\r>\,\vd r\wedge 0;
  \end{align*}
\item [\rm(iv)] for $f\in C_0^{\infty}(M)$ and $t>0$,
  \begin{align*}
    & \frac{1}{4}\left(P_t(f^2\log f^2)-P_tf^2\log P_tf^2\right)-\frac{1-\e^{-2k_1t}}{2k_1}P_t|\nabla f|^2\\
    &\quad\leq 4\int_0^t\left(\e^{\frac{k_2-k_1}{2}(t-r)}-1\right)P_r|\nabla f|^2+\E\big<\nabla f(X_r),\nabla P_{t-r}f(X_r)-\e^{-k_1(t-r)}\partrinv rt\nabla f(X_t)\big>\,\vd r\wedge 0;
  \end{align*}
\item [\rm(iv')] for $f\in C_0^{\infty}(M)$ and $t>0$,
  \begin{align*}
    & \frac{1}{4}\left(P_t(f^2\log f^2)-P_tf^2\log P_tf^2\right)-\frac{1-\e^{-2k_1t}}{2k_1}P_t|\nabla f|^2\\
    &\quad\leq 4\int_0^t\e^{\frac{k_2-k_1}{2}(t-r)}P_r|\nabla P_{t-r}f|^2-\e^{-k_1(t-r)}\E\l<\nabla f(X_r),\partrinv rt\nabla f(X_t)\r>\,\vd r\wedge 0.
  \end{align*}
\end{enumerate}
\end{theorem}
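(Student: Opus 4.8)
The core of the argument is the classical Bismut-type formula for the derivative of the semigroup together with its "damped" variant. The plan is to first establish the fundamental identity
\[
  \nabla P_t f(x) = \E\bigl[Q_t\, \partrinv0t \nabla f(X_t)\bigr],
\]
where $Q_t\colon T_xM\to T_xM$ solves the companion ODE $\tfrac{\vd}{\vd t}Q_t = -Q_t\, \partrinv0t \Ric^Z \partr0t$, $Q_0=\id$, acting along the trajectory. From the two-sided bound $k_1\le \Ric^Z\le k_2$ one reads off, by a Gronwall/comparison argument applied to $|Q_t v|^2$, the pointwise operator estimates $\e^{-k_2 t}\le \|Q_t\|$ in the appropriate sense and $\|Q_t\|\le \e^{-k_1 t}$; more precisely one controls $\langle v, Q_t v\rangle$ and $\langle v, (\e^{-k_1 t}-Q_t)v\rangle$ which is what feeds the cross terms in (ii). The scalar factor $\e^{-k_1 t}$ in front of the $\E\langle\nabla f,\partrinv0t\nabla f(X_t)\rangle$ term and the gap factor $\e^{(k_2-k_1)t/2}$ are precisely the traces of these two one-sided comparisons.

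Next I would prove the chain (i) $\Rightarrow$ (ii) $\Leftrightarrow$ (ii') and then (i) $\Rightarrow$ (iii), (iii'), (iv), (iv'), with the reverse implications (ii) $\Rightarrow$ (i) handled separately. For (i) $\Rightarrow$ (ii): expand
\[
  |\nabla P_t f|^2 = \bigl|\E[Q_t\partrinv0t\nabla f(X_t)]\bigr|^2,
\]
add and subtract $\E[\e^{-k_1 t}\partrinv0t\nabla f(X_t)]$, use Jensen to pass from $|\E[\cdot]|^2$ to $\E[|\cdot|^2]$, and bound each $\|Q_t\|\le\e^{-k_1 t}$ appearance together with the lower control coming from $k_2$; the combination reproduces exactly the right-hand side of (ii), and the "$\wedge 0$" simply records that the left-hand side is $\le 0$ under (i) (this is the two-sided gradient estimate). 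The equivalence (ii) $\Leftrightarrow$ (ii') is algebraic: $\nabla P_t f = \E[\partrinv0t\nabla f(X_t)] + \E[(Q_t-\id)\partrinv0t\nabla f(X_t)]$, so one can trade $\nabla f$ for $\nabla P_t f$ in the inner product at the cost of rearranging terms, using $P_t|\nabla f|^2 = \E|\partrinv0t\nabla f(X_t)|^2$. For (iii)/(iv) and their primed versions one integrates the gradient estimate along the semigroup: writing, for fixed $t$, the interpolation $r\mapsto P_r(\Phi(P_{t-r}f))$ with $\Phi(s)=s^2$ (resp.\ $s\log s$) and differentiating gives the Dirichlet-form representation $\tfrac{p}{4(p-1)}(P_tf^2-(P_tf^{2/p})^p)$ (resp.\ the entropy) as $\int_0^t$ of a quantity controlled by $P_r|\nabla P_{t-r}f|^2$; then one inserts the already-proven pointwise estimate (ii) applied to $P_{t-r}f$ at time... — more carefully, one applies the $t$-version of the gradient comparison inside the integrand with the Bismut formula on the interval $[r,t]$, which yields precisely the integral expressions displayed in (iii)--(iv').

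For the converse direction (ii) $\Rightarrow$ (i) I would follow the localization idea sketched in the introduction for the Haslhofer--Naber inequality: fix $x\in M$ and a unit vector $X\in T_xM$, choose $f\in C_0^\infty(M)$ with prescribed $1$-jet at $x$ (namely $\nabla f(x)=X$ and $\Hess f(x)$ arbitrary, to kill second-order remainders), and expand both sides of (ii) in powers of $t$ as $t\downarrow 0$ using the short-time asymptotics $\nabla P_t f = \nabla f - t(\nabla\Delta f + \nabla_{\nabla f}Z/\dots) + O(t^2)$ and $\E[\partrinv0t\nabla f(X_t)] = \nabla f - t\,\Ric^Z(\nabla f,\cdot)^\sharp + \dots$. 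The $O(1)$ and $O(t)$ terms cancel; the coefficient of $t$ (resp.\ the sign of the residual) forces simultaneously $\Ric^Z(X,X)\ge k_1$ (from the $\e^{-2k_1 t}P_t|\nabla f|^2$ term and the lower side) and $\Ric^Z(X,X)\le k_2$ (from the gap factor $\e^{(k_2-k_1)t/2}$ and the "$\wedge 0$" forcing the bracket to be nonpositive). The main obstacle, and the step needing genuine care rather than bookkeeping, is this Taylor-expansion matching: one must carry the expansion to the order at which $k_1$ and $k_2$ first appear \emph{separately} (the leading order only sees the combination), control the probabilistic remainder terms $\E[\cdots]$ uniformly, and check that the particular algebraic form of the right-hand side of (ii)—with its specific constants $4$, $1/2$, and the precise placement of $\e^{-k_1 t}$—is exactly rigid enough to separate the two bounds. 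Getting the primed versions (ii'), (iii'), (iv') is then a matter of repeating the algebraic manipulation $\nabla f\leftrightarrow\nabla P_{t-r}f$ under the integral sign.
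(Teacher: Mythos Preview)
Your plan for the converse direction (ii)$\Rightarrow$(i) via short-time Taylor expansion is essentially what the paper does (their Lemma~2.4 packages the needed asymptotics), so that part is fine.

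The genuine gap is in (i)$\Rightarrow$(ii). Your proposed route---expand $|\nabla P_tf|^2=\bigl|\E[Q_t\partrinv0t\nabla f(X_t)]\bigr|^2$, add and subtract $\e^{-k_1t}\E[\partrinv0t\nabla f(X_t)]$, then use Jensen and $\|Q_t\|\le\e^{-k_1t}$---does not produce the right-hand side of (ii). The specific structure there, with the cross term $\langle\nabla f,\nabla P_tf\rangle-\e^{-k_1t}\langle\nabla f,\E\partrinv0t\nabla f(X_t)\rangle$ and the gap factor $\e^{(k_2-k_1)t/2}-1$, does not fall out of a direct expansion of $|\E[Q_tV]|^2$. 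What the paper does instead is to start from the \emph{quadratic}
\[
\bigl|2(a\nabla f+b\nabla P_tf)-Q_t\partrinv0t\nabla f(X_t)\bigr|^2,\qquad a+b=1,
\]
and bound it from above, not below. The key technical move is to compare $Q_t$ not with $\e^{-k_1t}\id$ but with the \emph{midpoint} $\e^{-(k_1+k_2)t/2}\id$: writing
\[
\id-\e^{\frac{k_1+k_2}{2}t}Q_t=\int_0^t\e^{\frac{k_1+k_2}{2}s}Q_s\Bigl(\Ric^Z_{\partr0s}-\tfrac{k_1+k_2}{2}\id\Bigr)\vd s
\]
and using that the pinching $k_1\le\Ric^Z\le k_2$ is equivalent to $\bigl|\Ric^Z-\tfrac{k_1+k_2}{2}\id\bigr|\le\tfrac{k_2-k_1}{2}$, one gets $\bigl\|\id-\e^{(k_1+k_2)t/2}Q_t\bigr\|\le\e^{(k_2-k_1)t/2}-1$. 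After a Cauchy inequality on the cross term and expansion of the square, the inequality rearranges exactly into (ii) for $(a,b)=(1,0)$ and into (ii$'$) for $(a,b)=(0,1)$. Your decomposition around $\e^{-k_1t}$ would at best give $\|\e^{k_1t}Q_t-\id\|\le(k_2-k_1)t$, which is the wrong function of $t$ and does not close the algebra.

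A secondary issue: your claim that (ii)$\Leftrightarrow$(ii$'$) is a purely algebraic trade of $\nabla f$ for $\nabla P_tf$ is not correct---the two inequalities involve $|\nabla f|^2$ versus $|\nabla P_tf|^2$ and there is no identity relating these. In the paper they are parallel consequences of the single parametric estimate above (different choices of $a,b$), and each separately implies (i); they are never shown to be algebraically equivalent to one another.
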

\begin{remark}\label{rem1}
  The inequalities in (iv) and (iv') can be understood as limits of
  the inequalities (iii) and (iii') as $p\downarrow 1$ respectively.
\end{remark}

\begin{remark}\label{lem4}
  As application, Theorem~\ref{th1} can be used to characterize
  Einstein manifolds where $\Ric$ is a multiple of the metric $g$
  (constant Ricci curvature). The case $\Ric=\nabla Z$ can be
  characterized by all/some of the inequalities in (ii)-(iv) and
  (ii')-(iv') for $k_1=k_2=0$, where the inequalities in (iii),
  (iii'), (iv) and (iv') may be understood as $k_2=k_1$ and
  $k_1\rightarrow 0$.
\end{remark}

\begin{proof}[Proof of Theorem \ref{th1}.]
  We divide the proof into two parts. In Part I, we will derive the
  functional inequalities from the curvature condition; in Part II, we
  will prove the reverse.\smallskip

  {\bf Part I}. We already know that the curvature condition
  $\Ric^Z\geq k_1$ is equivalent to each of the following functional
  inequalities (see e.g. \cite[Theorem 2.3.1]{Wbook2}):
  \begin{enumerate}[1)]
  \item for all $f\in C_0^{\infty}(M)$,
    $$|\nabla P_tf|^2\leq \e^{-2k_1t}P_t|\nabla f|^2;$$
  \item for all $p\in (1,2]$ and $f\in C_0^{\infty}(M)$,
    $$\frac{p}{4(p-1)}\left(P_tf^2-(P_tf^{2/p})^p\right)\leq
    \frac{1-\e^{-2k_1t}}{2k_1}P_t|\nabla f|^2;$$
  \item for all $f\in C_0^{\infty}(M)$,
    $$P_t(f^2\log f^2)-P_tf^2\log P_tf^2\leq
    \frac{2(1-\e^{-2k_1t})}{k_1}P_t|\nabla f|^2.$$
  \end{enumerate}
  Now, we prove that under the curvature condition (i) in Theorem
  \ref{th1}, the remaining bounds in (ii)-(iv) and (ii')-(iv') hold
  true.\smallskip

  {\bf(a) }\ $\text{(i)}\Rightarrow\text{(ii)}$ and (ii'): We start
  with well-known stochastic representation formulas for diffusion
  semigroups. By Bismut's formula (see \cite{Bismut, EL}), we have
$$(\nabla P_tf)(x)=\E[Q_t\partrinv0t \nabla f(X_t^x)].$$
Here $Q_t$ is the $\Aut(T_xM)$-valued process defined by the linear
pathwise differential equation
\begin{equation}\label{Eq:Qt}
  \frac{\vd}{\vd t}Q_t=-Q_t\Ric^Z_{\partr 0t},\quad Q_0=\id_{T_xM},
\end{equation}
where
\begin{equation}\label{Eq:RicZ0t}
  \Ric^Z_{\partr 0t}:=\partrinv 0t\circ\Ric^Z_{X_t}\circ\partr
  0t\in\End(T_xM)
\end{equation}
and $\partr0t$ is parallel transport in $TM$ along~$X_t$. As usual,
$\Ric^Z_x$ operates as a linear homomorphism on $T_xM$ via
$\Ric^Z_xv=\Ric^Z(\cdot,v)^\sharp$, $v\in T_xM$.


Let $a$ and $b$ be two constants such that $a+b=1$. We first observe
that
\begin{align*}
  &2a\nabla f-2b \nabla P_tf -Q_t\partrinv0t\nabla f(X_t)\\
  &=2a\nabla f-2b \nabla P_tf-\e^{-\frac{k_2+k_1}{2}t}\partrinv0t\nabla f(X_t)
    +\e^{-\frac{k_2+k_1}{2}t}\,\left(\id-\e^{\frac{k_2+k_1}{2}t}Q_t\right)\partrinv0t\nabla f(X_t)
\end{align*}
which implies that
\begin{align}\label{add-1}
  &\left|2(a\nabla f+b\nabla P_tf)-Q_t\partrinv0t\nabla f(X_t)\right|\notag\\
  &\leq \left|2(a\nabla f+b\nabla P_tf)-\e^{-\frac{k_2+k_1}{2}t}\partrinv0t\nabla f(X_t)\right|
    +\left|\e^{-\frac{k_2+k_1}{2}t}\,\left(\id-\e^{\frac{k_2+k_1}{2}t}Q_t\right)\partrinv0t\nabla f(X_t)\right|.
\end{align}
We now turn to estimate the last term on the right-hand side above,
\begin{align*}
  \left|\left(\id-\e^{\frac{k_2+k_1}{2}t}Q_t\right)\partrinv0t\nabla f(X_t)\right|\leq \left\|\id-\e^{\frac{k_2+k_1}{2}t}Q_t\right\|\,|\nabla f(X_t)|.
\end{align*}
To estimate $\|\id-\e^{\frac{k_2+k_1}{2}t}Q_t\|$, we rewrite the
involved operator as
$$\id-\e^{\frac{k_2+k_1}{2}t}Q_t=\int_0^t \e^{\frac{k_2+k_1}{2}s}Q_{s}\l(\Ric^Z_{\partr0s}-\frac{k_1+k_2}{2}\id\r)\vd s.$$
Hence, by the curvature condition (i), we have
\begin{align*}
  \left\|\id-\e^{\frac{k_2+k_1}{2}t}Q_t\right\|
  &\leq \int_0^t \e^{\frac{k_2+k_1}{2}s}\|Q_{s}\|\l|\Ric^Z_{\partr0s}-\frac{k_1+k_2}{2}\,\id\r|\,\vd s\\
  &\leq \int_0^t \e^{\frac{k_2+k_1}{2}s} \e^{-k_1s}\frac{k_2-k_1}{2}\,\vd s
    = \e^{\frac{(k_2-k_1)t}{2}}-1
\end{align*}
which implies
$$\left|\e^{-\frac{k_2+k_1}{2}t}\left(\id-\e^{\frac{k_2-k_1}{2}t}Q_t\right)\partrinv0t\nabla f(X_t)\right|\leq \e^{-\frac{k_1+k_2}{2}t}\left(\e^{\frac{k_2-k_1}{2}t}-1\right)|\nabla f|(X_t).$$
By this and Eq.~\eqref{add-1}, we have
\begin{align}\label{add-eq-1}
  &\left|2(a\nabla f+b\nabla P_tf)-Q_t\partrinv0t\nabla f(X_t)\right|^2\notag\\
  &\quad\leq \l[\left|2(a\nabla f+b\nabla P_tf)-\e^{-\frac{k_1+k_2}{2}t}\partrinv0t\nabla f(X_t)\right|+\e^{-\frac{k_1+k_2}{2}t}\left(\e^{\frac{k_2-k_1}{2}t}-1\right)|\nabla f|(X_t)\r]^2\notag\\
  &\quad=\left|2(a\nabla f+b\nabla P_tf)-\e^{-\frac{k_1+k_2}{2}t}\partrinv0t\nabla f(X_t)\right|^2\notag\\
  &\qquad +
    2\e^{-\frac{k_1+k_2}{2}t}\left(\e^{\frac{k_2-k_1}{2}t}-1\right)\left|2(a\nabla f+b\nabla P_tf)-\e^{-\frac{k_1+k_2}{2}t}\partrinv0t\nabla f(X_t)\right|\,|\nabla f|(X_t)\notag\\
  &\qquad +\e^{-(k_1+k_2)t}\left(\e^{\frac{k_2-k_1}{2}t}-1\right)^2|\nabla f|^2(X_t).
\end{align}
By Cauchy's inequality, we have
\begin{align*}
  &2\e^{-\frac{k_1+k_2}{2}t}\left(\e^{\frac{k_2-k_1}{2}t}-1\right)\left|2(a\nabla f+b\nabla P_tf)-\e^{-\frac{k_1+k_2}{2}t}\partrinv0t\nabla f(X_t)\right|\,|\nabla f|(X_t)\\
  &\quad=2\sqrt{\e^{\frac{k_2-k_1}{2}t}-1}~\left|2(a\nabla f+b\nabla P_tf)-\e^{-\frac{k_1+k_2}{2}t}\partrinv0t\nabla f(X_t)\right| \e^{-\frac{k_1+k_2}{2}t}\sqrt{\e^{\frac{k_2-k_1}{2}t}-1}~|\nabla f|(X_t)\\
  &\quad\leq \left(\e^{\frac{k_2-k_1}{2}t}-1\right)\left|2(a\nabla f+b\nabla P_tf)-\e^{-\frac{k_1+k_2}{2}t}\partrinv0t\nabla f(X_t)\right|^2+
    \e^{-(k_1+k_2)t}\left(\e^{\frac{k_2-k_1}{2}t}-1\right)|\nabla f|^2(X_t).
\end{align*}
Thus, combining this inequality with \eqref{add-eq-1}, we obtain
\begin{align*}
  &\left|2(a\nabla f+b\nabla P_tf)-Q_t\partrinv0t\nabla f(X_t)\right|^2\\
  &\quad\leq\e^{\frac{k_2-k_1}{2}t}\left|2(a\nabla f+b\nabla P_tf)-\e^{-\frac{k_2+k_1}{2}t}\partrinv0t\nabla f(X_t)\right|^2+\e^{-(k_2+k_1)t}\left(\e^{\frac{k_2-k_1}{2}t}-1\right)\e^{\frac{k_2-k_1}{2}t}|\nabla f|^2(X_t)\\
  &\quad\leq 4\e^{\frac{k_2-k_1}{2}t}|a\nabla f+b\nabla P_tf|^2-4\e^{-k_1t}\big<a\nabla f+b\nabla P_tf, \partrinv0t\nabla f(X_t)\big>
    +\e^{-2k_1t}|\nabla f|^2(X_t).
\end{align*}
Expanding the terms above yields
\begin{align}
  &\left|Q_t\partrinv0t\nabla f(X_t)\right|^2-\e^{-2k_1t}|\nabla f|^2(X_t)\notag\\
  &\leq 4\l[\left(\e^{\frac{k_2-k_1}{2}t}-1\right)|a \nabla f+b\nabla P_tf|^2+\big<a\nabla f+b \nabla P_tf,Q_t\partrinv0t\nabla f(X_t)-\e^{-k_1t}\partrinv0t\nabla f(X_t)\big>\r].\label{Eq:Estimate}
\end{align}
We observe that
$|\nabla P_tf|^2\leq \E(|Q_t\partrinv0t\nabla f(X_t)|^2)$.  Hence, by
taking expectation on both sides of inequality~\eqref{Eq:Estimate}, we arrive at
\begin{align}\label{add-3}
  &|\nabla P_tf|^2-\e^{-2k_1t}P_t|\nabla f|^2\notag\\
  &\leq 4\l[\left(\e^{\frac{k_2-k_1}{2}t}-1\right)|a \nabla f+b\nabla P_tf|^2+\big<a\nabla f+b \nabla P_tf, \nabla P_tf-\e^{-k_1t}\E \partrinv0t\nabla f(X_t)\big>\r].
\end{align}
Thus, letting $a=1$, $b=0$, respectively $a=0$, $b=1$, we complete the
proof of (ii) and (ii').  \smallskip

{\bf(b) }\ $\text{(i)}\Rightarrow\text{(iii),\,(iii')}$: By It\^{o}'s
formula, we have
\begin{align}\label{eq2}
  \vd (P_{t-s}f^{2/p})^p(X_s)&=\vd M_s+(L+\partial_s)\left(P_{t-s}f^{2/p}(X_{s})\right)^p\,\vd s\notag\\
                             &=\vd M_s+p(p-1)\left(P_{t-s}f^{2/p}(X_{s})\right)^{p-2}\,|\nabla P_{t-s}f^{2/p}|^2(X_s)\,\vd s
\end{align}
where $M_s$ is a local martingale.  In addition,
\begin{align}\label{eq3}
  \left|\nabla P_{t-s}f^{2/p}(X_s)\right|^2&=\left|\partr0s\E\left[\partrinv0s Q_{s,t}\partrinv st\nabla f^{2/p}(X_t)|\mathscr{F}_s\right]\right|^2\notag\\
                                           &=\frac{4}{p^2}\left|\E\left[f^{(2-p)/p}(X_t)\partrinv 0s Q_{s,t}\partrinv st\nabla f(X_t)|\mathscr{F}_s\right]\right|^2\notag\\
                                           &\leq \frac{4}{p^2}(P_{t-s}f^{2(2-p)/p})(X_s)\,\E\left[|Q_{s,t}\partrinv st\nabla f(X_t)|^2|\mathscr{F}_s\right],
\end{align}
where for fixed $s\geq0$, the two-parameter family $Q_{s,t}$ of random
automorphisms of $T_{X_s}M$ solves the pathwise equation
$$\frac{\vd Q_{s,t}}{\vd t}=-Q_{s,t}\,\Ric^Z_{\partr st},\quad Q_{s,s}=\id_{{X_s}}, \quad  t\geq s.$$
Analogously to Eq.~\eqref{Eq:RicZ0t} we have
$\Ric^Z_{\partr st}=\partrinv st\circ\Ric^Z_{X_t}\circ\partr st$.

As $2-p\in [0,1]$, by Jensen's inequality, we first observe
$$P_{t-s}f^{2(2-p)/p}\leq (P_{t-s}f^{2/p})^{2-p}.$$
Combining this with \eqref{eq2} and \eqref{eq3}, we obtain
\begin{align*}
  \vd (P_{t-s}f^{2/p})^p\leq \vd M_s+\frac{4(p-1)}{p}\,\E\left[|Q_{s,t}\partrinv st\nabla f(X_t)|^2|\mathscr{F}_s\right]\vd s.
\end{align*}
Integrating both sides from $0$ to $t$ and taking expectation, we
arrive at
\begin{align}\label{eq4}
  \frac{p(P_tf^2-(P_tf^{2/p})^p)}{4(p-1)}\leq \int_0^t\E\left[|Q_{s,t}\partrinv st\nabla f(X_t)|^2|\mathscr{F}_s\right]\vd s.
\end{align}
Now, using similar arguments as in \textbf{(a)}, we obtain
\begin{align}\label{eq5}
  &\E\left[|Q_{s,t}\partrinv st\nabla f(X_t)|^2|\mathscr{F}_s\right]\notag\\
  &\quad\leq \e^{-2k_1(t-s)}P_{t-s}|\nabla f|^2(X_s)+ 4\l(\e^{\frac{k_2-k_1}{2}(t-s)}-1\r)|\nabla f|^2(X_s)\notag\\
  &\qquad +4\E\l[\big<\nabla f(X_s), \nabla P_{t-s}f(X_s)-\e^{-k_1(t-s)}\partrinv st\nabla f(X_t)\big>\big|\mathscr{F}_s\r]
\end{align}
and
\begin{align}\label{eq6}
  &\E\left[|Q_{s,t}\partrinv st\nabla f(X_t)|^2|\mathscr{F}_s\right]\notag\\
  &\quad\leq \e^{-2k_1(t-s)}P_{t-s}|\nabla f|^2(X_s)+ 4\e^{\frac{k_2-k_1}{2}(t-s)}|\nabla P_{t-s}f|^2(X_s)\notag\\
  &\qquad -4\e^{-k_1(t-s)}\E\l[\l<\nabla f(X_s), \partrinv st\nabla f(X_t)\r>\big|\mathscr{F}_s\r].
\end{align}
Together with \eqref{eq4}, the proof of (iii) and (iii') is
completed.\smallskip

{\bf (c) }\ $\text{(i)}\Rightarrow\text{(iv) and (iv')}$: By It\^{o}'s
formula, we have
\begin{align}\label{eq7}
  \vd (P_{t-s}f^2)(X_s)\log (P_{t-s}f^2)(X_s)&=\vd \tilde{M_s}+(L+\partial_s)(P_{t-s}f^2)(X_s)\log (P_{t-s}f^2)(X_s)\,\vd s\notag\\
                                             &=\vd \tilde{M_s}+\frac{1}{P_{t-s}f^2(X_s)}|\nabla P_{t-s}f^2|^2(X_s)\,\vd s
\end{align}
where $\tilde{M}_s$ is a local martingale. Furthermore, using the
derivative formula, we have
\begin{align*}
  |\nabla P_{t-s}f^2|^2(X_s)&=\left|\E\left[\partrinv0s Q_{s,t}\partrinv st\nabla f^2(X_t)|\mathscr{F}_s\right]\right|^2\\
                            &\leq 4P_{t-s}f^2(X_s)\E\left[|Q_{s,t}\partrinv st\nabla f(X_t)|^2|\mathscr{F}_s\right].
\end{align*}
Combining this with \eqref{eq7}, we obtain
\begin{align*}
  \vd (P_{t-s}f^2)(X_s)\log (P_{t-s}f^2)(X_s)\leq \vd \tilde{M_s}+4\E\left[|Q_{s,t}\partrinv st\nabla f(X_t)|^2|\mathscr{F}_s\right]\vd s.
\end{align*}
Using the estimates in \eqref{eq5} and \eqref{eq6} for
$\E[|Q_{s,t}\partrinv st\nabla f(X_t)|^2|\mathscr{F}_s]$, we finish
the proof by integrating from $0$ to $t$ and taking expectation on
both sides.
\end{proof}

\begin{remark}
  Actually, when $k_1\neq k_2$, the following inequality can be
  derived by minimizing the upper bound in \eqref{add-3} over $a,b$
  under the restriction $a+b=1$:
  \begin{align}\label{add-2}
    |\nabla P_tf|^2-\e^{-2k_1t}P_t|\nabla f|^2&\leq \Bigg\{4\l[\left(\e^{\frac{k_2-k_1}{2}t}-1\right)|\nabla f|^2+\l<\nabla f, \nabla P_tf\r>-\e^{-k_1t}\l<\nabla f, \E \partrinv0t\nabla f(X_t)\r>\r]\notag\\
                                              &\quad-\frac{\l<\nabla P_tf-\nabla f, 2\left(\e^{\frac{k_2-k_1}{2}t}-1\right)\nabla f+\nabla P_tf-\e^{-k_1t}\E \partrinv0t\nabla f(X_t)\r>^2}{\left(\e^{\frac{k_2-k_1}{2}t}-1\right)|\nabla P_tf-\nabla f|^2}\Bigg\}\wedge 0\notag\\
                                              &=\Bigg\{4\l[\e^{\frac{k_2-k_1}{2}t}|\nabla P_tf|^2-\e^{-k_1t}\l<\nabla P_tf,\E \partrinv0t\nabla f(X_t)\r>\r]\notag\\
                                              &\quad-\frac{\l<\nabla P_tf-\nabla f, \left(2\e^{\frac{k_2-k_1}{2}t}-1\right)\nabla P_tf -\e^{-k_1t}\E \partrinv0t\nabla f(X_t)\r>^2}{\left(\e^{\frac{k_2-k_1}{2}t}-1\right)|\nabla P_tf-\nabla f|^2}\Bigg\}\wedge 0.
  \end{align}
  It is easy to see that this bound is sharper than the ones
  given in Theorem \ref{th1} (ii) and (ii').
\end{remark}

\begin{proof}
  Inequality \eqref{add-2} can be checked as follows.  First recall
  estimate \eqref{add-3}:
  \begin{align*}
    &|\nabla P_tf|^2-\e^{-2k_1t}P_t|\nabla f|^2\\
    &\leq 4\l[\left(\e^{\frac{k_2-k_1}{2}t}-1\right)|a \nabla f+b\nabla P_tf|^2+\big<a\nabla f+b \nabla P_tf, \nabla P_tf-\e^{-k_1t}\E \partrinv0t\nabla f(X_t)\big>\r].
  \end{align*}
  Taking $b=1-a$ in the terms of the right-hand side, we get
  \begin{align}\label{eq11}
    &4\l[\left(\e^{\frac{k_2-k_1}{2}t}-1\right)|a \nabla f+b\nabla P_tf|^2+\big<a\nabla f+b \nabla P_tf, \nabla P_tf-\e^{-k_1t}\E \partrinv0t\nabla f(X_t)\big>\r]\notag\\
    &=4\left[\left(\e^{\frac{k_2-k_1}{2}t}-1\right)|\nabla f-\nabla P_tf|^2 a^2+\big<\nabla f-\nabla P_tf, (2\e^{\frac{k_2-k_1}{2}t}-1)\nabla P_tf-\e^{-k_1t}\E \partrinv0t\nabla f(X_t)\big>a\right. \notag\\
    &\quad +\left.\e^{\frac{k_2-k_1}{2}t}|\nabla P_tf|^2-\e^{-k_1t}\big<\nabla P_tf, \E \partrinv0t\nabla f(X_t)\big>\right].
  \end{align}
  For the value
  \begin{align}\label{add-4}
    a=a_0=-\frac{\l<\nabla f-\nabla P_tf, (2\e^{\frac{k_2-k_1}{2}t}-1)\nabla P_tf-\e^{-k_1t}\E \partrinv0t\nabla f(X_t)\r>}{2\left(\e^{\frac{k_2-k_1}{2}t}-1\right)|\nabla f-\nabla P_tf|^2},
  \end{align}
  the expression in \eqref{eq11} reaches its minimum as a function of
  $a$:
  \begin{align*}
    &4\l[\e^{\frac{k_2-k_1}{2}t}|\nabla P_tf|^2-\e^{-k_1t}\l<\nabla P_tf, \E \partrinv0t\nabla f(X_t)\r>\r]\\
    &\qquad-\frac{\l<\nabla f-\nabla P_tf, (2\e^{\frac{k_2-k_1}{2}t}-1)\nabla P_tf-\e^{-k_1t}\E \partrinv0t\nabla f(X_t)\r>^2}{\left(\e^{\frac{k_2-k_1}{2}t}-1\right)|\nabla f-\nabla P_tf|^2}.
  \end{align*}
  Similarly, substituting $a=1-b$ in the terms on the left-hand side of
  Eq.~\eqref{eq11}, we get
  \begin{align}\label{eq12}
    &4\l[\left(\e^{\frac{k_2-k_1}{2}t}-1\right)|a \nabla f+b\nabla P_tf|^2+\l<a\nabla f+b \nabla P_tf, \nabla P_tf-\e^{-k_1t}\E \partrinv0t\nabla f(X_t)\r>\r]\notag\\
    &=4\Big[\left(\e^{\frac{k_2-k_1}{2}t}-1\right)|\nabla f-\nabla P_tf|^2 b^2+\l<\nabla f-\nabla P_tf, 2\left(\e^{\frac{k_2-k_1}{2}t}-1\right)\nabla f+\nabla P_tf-\e^{-k_1t}\E \partrinv0t\nabla f(X_t)\r>b \notag\\
    &\quad +\left(\e^{\frac{k_2-k_1}{2}t}-1\right)|\nabla f|^2+\l<\nabla f, \nabla P_tf-\e^{-k_1t}\E \partrinv0t\nabla f(X_t)\r>\Big].
  \end{align}
  It is easy to see that for
$$b=1-a_0=-\frac{\l<\nabla f-\nabla P_tf, 2\left(\e^{\frac{k_2-k_1}{2}t}-1\right)\nabla f+\nabla P_tf-\e^{-k_1t}\E \partrinv0t\nabla f(X_t)\r>}{2\left(\e^{\frac{k_2-k_1}{2}t}-1\right)|\nabla f-\nabla P_tf|^2},$$
expression \eqref{eq12} reaches its minimal value:
\begin{align*}
  &4\l[\left(\e^{\frac{k_2-k_1}{2}t}-1\right)|\nabla f|^2+\l<\nabla f, \nabla P_tf\r>-\e^{-k_1t}\l<\nabla f, \E \partrinv0t\nabla f(X_t)\r>\r]\\
  &\qquad -\frac{\l<\nabla P_tf-\nabla f, 2\left(\e^{\frac{k_2-k_1}{2}t}-1\right)\nabla f+\nabla P_tf-\e^{-k_1t}\E \partrinv0t\nabla f(X_t)\r>^2}{\left(\e^{\frac{k_2-k_1}{2}t}-1\right)|\nabla P_tf-\nabla f|^2}.
\end{align*}
As the minimum is unique, we conclude that the upper bounds
\eqref{eq11} and \eqref{eq12} are indeed equivalent.
\end{proof}

To prove that the inequalities in (ii)-(iv), (ii')-(iv') imply
condition (i), we use the following lemma.

\begin{lemma}\label{lem1}
  For $x\in M$, let $X\in T_x M$ with $|X|=1$. Let
  $f\in C_0^{\infty}(M)$ such that $\nabla f(x)=X$ and $\Hess_f(x)=0$,
  and let $f_n=n+f$ for $n\geq 1$. Then,
  \begin{enumerate}[\rm(i)]
  \item for any $p>0$,
    \begin{align*}
      \Ric^Z(X,X)
      &=\lim_{t\rightarrow 0}\frac{P_{t}|\nabla f|^p(x)-|\nabla P_{t}f|^p(x)}{pt};
    \end{align*}
  \item for any $p>1$,
    \begin{align*}
      \Ric^Z(X,X)
      &=\lim_{n\rightarrow\infty}\lim_{t\rightarrow 0}\frac{1}{t}\l(P_{t}|\nabla f_n|^2-\frac{p\left\{P_{t}f_n^2-(P_{t}f_n^{{2}/{p}})^p\right\}}{4(p-1)t}\r)(x);
    \end{align*}
  \item $\Ric^Z(X,X)$ can be calculated as
    \begin{align*}
      \Ric^Z(X,X)=\lim_{n\rightarrow\infty}\lim_{t\rightarrow 0}\frac{1}{4t^2}\l\{4tP_{t}|\nabla f_n|^2+(P_{t}f_n^2)\log
      P_{t}f_n^2-P_{t}{f_n^2\log f_n^2}\r\}(x);
    \end{align*}
  \item $\Ric^Z(X,X)$ is also given by the following two limits:
    \begin{align*}
      \Ric^Z(X, X)=&\lim_{t\rightarrow0}\frac{\l\{\big<\nabla f, \E \partrinv0t\nabla f(X_t)\big>-\l<\nabla f, \nabla P_t f\r>\r\}(x)}{t}\\
      =&\lim_{t\rightarrow 0}\frac{\l\{\big<\nabla P_tf, \E \partrinv0t\nabla f(X_t)\big>-|\nabla P_tf|^2\r\}(x)}{t}.
    \end{align*}
  \end{enumerate}
\end{lemma}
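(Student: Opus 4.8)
The proof will rest on two standard facts. The first is the short-time behaviour of the semigroup: for a function $g$ that is $C^2$ near $x$ and bounded (in particular for $g\in C_0^\infty(M)$), one has $P_tg(x)=g(x)+tLg(x)+\tfrac{t^2}{2}L^2g(x)+o(t^2)$ as $t\downarrow0$, together with $\partial_t(\nabla P_tf)\big|_{t=0}=\nabla Lf$, which follows from $\partial_tP_tf=LP_tf$. Since $f$ has compact support and $X^x$ is non-explosive, these expansions and the interchanges of limits used below will be justified by a routine localisation argument (for small $t$ the $L$-diffusion stays, with high probability, in a neighbourhood of $x$ on which $\Ric^Z$ is bounded). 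The second fact is the Bochner--Weitzenb\"ock identity for $L=\Delta+Z$,
$$\Gamma_2(f):=\tfrac12L|\nabla f|^2-\langle\nabla f,\nabla Lf\rangle=|\Hess_f|^2+\Ric^Z(\nabla f,\nabla f),$$
so that at the distinguished point $x$, where $\Hess_f(x)=0$ and $|\nabla f(x)|=1$, one has $\Gamma_2(f)(x)=\Ric^Z(X,X)$. In each item the strategy is identical: expand the relevant functional to the appropriate order in $t$, check that the leading terms cancel, and identify the surviving coefficient with $\Gamma_2(f)(x)$ (after letting $n\to\infty$ in (ii) and (iii)).

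For (i) I will differentiate at $t=0$. Since $\nabla|\nabla f|^2(x)=2\,\Hess_f(x)(\nabla f(x),\cdot)^\sharp=0$ and $|\nabla f(x)|=1$, the chain rule gives $\partial_tP_t|\nabla f|^p(x)\big|_{t=0}=L(|\nabla f|^p)(x)=\tfrac p2L|\nabla f|^2(x)$, while $\partial_t|\nabla P_tf|^p(x)\big|_{t=0}=p\,\langle\nabla f,\nabla Lf\rangle(x)$; subtracting and dividing by $p$ gives exactly $\Gamma_2(f)(x)=\Ric^Z(X,X)$.

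For (ii) and (iii) I will set $\Psi_t:=\frac{p}{4(p-1)}\bigl(P_tf_n^2-(P_tf_n^{2/p})^p\bigr)$ and $\Phi_t:=\frac14\bigl(P_t(f_n^2\log f_n^2)-P_tf_n^2\log P_tf_n^2\bigr)$ (both well defined for $n$ large, since then $f_n>0$), and Taylor-expand each to second order in $t$, using $L(h^p)=ph^{p-1}Lh+p(p-1)h^{p-2}|\nabla h|^2$ and the product rule under $L$ with $h=f_n^{2/p}$, respectively the chain rule for $u\mapsto u\log u$ with $u=f_n^2$. The elementary identities $h^{p-2}|\nabla h|^2=\tfrac{4}{p^2}|\nabla f|^2$ and $|\nabla(f_n^2)|^2/f_n^2=4|\nabla f|^2$ (using $\nabla f_n=\nabla f$) show that the first-order coefficient equals $|\nabla f|^2$ in both cases, so $\Psi_t=t|\nabla f|^2+t^2B_n+o(t^2)$ and $\Phi_t=t|\nabla f|^2+t^2B_n'+o(t^2)$, where $B_n,B_n'$ are explicit expressions assembled from $L^2$ and products under $L$ of powers of $f_n$. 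Since $P_t|\nabla f_n|^2(x)=|\nabla f|^2(x)+tL|\nabla f|^2(x)+o(t)$, the $O(1)$ contributions cancel inside the brackets of (ii) and (iii), and the inner limit $t\downarrow0$ leaves $L|\nabla f|^2(x)-B_n(x)$, respectively $L|\nabla f|^2(x)-B_n'(x)$. The decisive point is that after substituting $h=f_n^{2/p}$ (respectively $u=f_n^2$), every term of $B_n$ and $B_n'$ except $\tfrac12L|\nabla f|^2+\langle\nabla f,\nabla Lf\rangle$ carries a factor $f_n^{-1}$ or $f_n^{-2}$ and therefore vanishes in the limit; hence $B_n(x),B_n'(x)\to\tfrac12L|\nabla f|^2(x)+\langle\nabla f,\nabla Lf\rangle(x)$ as $n\to\infty$, and the double limit equals $\Gamma_2(f)(x)=\Ric^Z(X,X)$. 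Carrying out the second-order $t$-expansion of these nonlinear functionals and controlling the $n\to\infty$ limit (the uniform smallness of the $f_n^{-j}$ terms near $x$) is the technical core of the argument and the step I expect to be the main obstacle.

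Finally, for (iv) I will invoke Bismut's formula $\nabla P_tf(x)=\E[Q_t\,\partrinv0t\nabla f(X_t)]$ together with \eqref{Eq:Qt}, which gives $\id-Q_t=\int_0^tQ_s\,\Ric^Z_{\partr0s}\,\vd s=t\,\Ric^Z_x+o(t)$. Since $\partrinv0t\nabla f(X_t)\to\nabla f(x)=X$ and $\nabla P_tf(x)\to X$ as $t\downarrow0$, this yields $\E[\partrinv0t\nabla f(X_t)]=\nabla P_tf(x)+t\,\Ric^Z_xX+o(t)$. Pairing with $\nabla f(x)$ gives $\langle\nabla f,\E[\partrinv0t\nabla f(X_t)]\rangle(x)-\langle\nabla f,\nabla P_tf\rangle(x)=t\,\Ric^Z(X,X)+o(t)$, and pairing with $\nabla P_tf(x)$ gives $\langle\nabla P_tf,\E[\partrinv0t\nabla f(X_t)]\rangle(x)-|\nabla P_tf|^2(x)=t\,\Ric^Z(X,X)+o(t)$; dividing by $t$ and letting $t\downarrow0$ gives both limits in (iv).
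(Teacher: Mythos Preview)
Your proposal is correct and is in the same spirit as the paper's proof. For items (i)--(iii) the paper does not give an argument at all but simply refers to \cite[Theorem~2.2.4]{Wbook2}; your Bochner/$\Gamma_2$ expansion is the standard computation behind that reference, so here you are supplying more detail than the paper, not less. Your honest flag that the second-order $t$-expansion in (ii)--(iii) is ``the main obstacle'' is fair, but this is exactly the content of the cited theorem and no new idea is needed.

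For item (iv) there is a small difference in packaging. The paper expands $\E[\partrinv0t\nabla f(X_t)]$ via It\^o's formula for the vector-valued process, obtaining $\nabla f(x)+t(\square+\nabla_Z)\nabla f(x)+o(t)$, and then uses the Weitzenb\"ock-type identity $\langle L\nabla f,\nabla f\rangle-\langle\nabla Lf,\nabla f\rangle=\Ric^Z(\nabla f,\nabla f)$ at a point where $\Hess_f=0$. You instead subtract Bismut's formula directly and use the ODE \eqref{Eq:Qt} to write $\E[\partrinv0t\nabla f(X_t)]-\nabla P_tf=\E[(\id-Q_t)\partrinv0t\nabla f(X_t)]$ with $\id-Q_t=\int_0^tQ_s\Ric^Z_{\partr0s}\,\vd s$. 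This is equally valid and avoids naming the connection Laplacian explicitly; in fact the paper adopts precisely your $Q_t$-argument later when proving the time-dependent analogue (Lemma~\ref{lem2}). So the two routes are really the same computation viewed from slightly different angles.
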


\begin{proof}
  The formulae in (i)--(iii) can be found in \cite[Theorem
  2.2.4]{Wbook2} (see also \cite{Bakry, Sturm}).  The two expressions
  in (iv) are easily derived using Taylor expansions:
  \begin{align*}
    &\big<\nabla f, \E \partrinv0t\nabla f(X_t)\big>(x)-\l<\nabla f, \nabla P_tf\r>(x)\\
    &\quad=(\l<\nabla f, L\nabla f\r>(x)-\l<\nabla f, \nabla Lf\r>(x))t+\text{\rm o}(t)\\
    &\quad=\Ric^Z(\nabla f, \nabla f)(x)\,t+\text{\rm o}(t)
  \end{align*}
  and
  \begin{align*}
    &\big<\nabla P_tf, \E \partrinv0t\nabla f(X_t)\big>(x)-\l<\nabla P_tf, \nabla P_tf\r>(x)\\
    &\quad=(\l<\nabla f, L\nabla f\r>(x)-\l<\nabla f, \nabla Lf\r>(x))t+\text{\rm o}(t)\\
    &\quad=\Ric^Z(\nabla f, \nabla f)(x)\,t+\text{\rm o}(t).
  \end{align*}
  Here, we use the fact that for $f\in C_0^{\infty}(M)$ such that
  $\Hess_f(x)=0$, the following equation holds:
  \begin{equation*}\Ric^Z(\nabla f, \nabla f)(x)=\l<L\nabla f, \nabla
    f\r>(x)-\l<\nabla Lf, \nabla f\r>(x).\qedhere
  \end{equation*}
\end{proof}

Using Lemma \ref{lem1}, we are now able to complete the proof of the
main result.
\begin{proof}[Proof of Theorem \ref{th1}.]

  {\bf Part II}\ \
  ``(ii) and (ii') $\Rightarrow$ (i)'':\\
  Fix $x\in M$ and let $f\in C_0^{\infty}(M)$ such that
  $\Hess_f(x)=0$.  Without explicit mention, the following computations are
  all taken implicitly at the point $x$.  First, we rewrite the
  inequalities (ii) and (ii') as follows,
  \begin{align*}
    &\frac{|\nabla P_tf|^2-P_t|\nabla f|^2}{2t}+\frac{1-\e^{-2k_1t}}{2t}P_t|\nabla f|^2\\
    & \leq \frac2t\left(\e^{\frac{k_2-k_1}{2}t}-1\right)|a\nabla f+b\nabla P_tf|^2+2\frac{\l<a\nabla f+b\nabla P_tf, \nabla P_tf\r>-\big<a\nabla f+b\nabla P_tf, \E \partrinv0t\nabla f(X_t)\big>}{t}\\
    &\quad +\frac2t \left(1-\e^{-k_1t}\right)\E\big<a\nabla f+b\nabla P_tf, \partrinv0t\nabla f(X_t)\big>
  \end{align*}
  where $a=1$, $b=0$ or $a=0$, $b=1$.  Letting $t\rightarrow 0$, by
  Lemma \ref{lem1}, we obtain
$$-\Ric^Z(\nabla f, \nabla f)+k_1|\nabla f|^2\leq (k_2-k_1)|\nabla f|^2-2\Ric^Z(\nabla f, \nabla f)+2k_1|\nabla f|^2$$
which implies that
$$\Ric^Z(\nabla f, \nabla f)\leq k_2|\nabla f|^2.$$

``(iii), (iv), (iii'), (iv') $\Rightarrow $ (i)'': We only prove that
``(iii) and (iii') imply (i)'', as the inequalities (iv) and (iv') can
be considered as limits of the inequalities (iii) and (iii') as
$p\downarrow 1$.

For $x\in M$ and $f\in C^{\infty}_0(M)$ such that $\Hess_f(x)=0$, let
$f_n:=f+n$ and rewrite (iii) as
\begin{align}\label{eq8}
  &\frac{1}{t^2}\l(\frac{p(P_tf_n^2-(P_tf_n^{2/p})^p)}{4(p-1)}-tP_t|\nabla f_n|^2\r)
    -\frac1{t^2}{\int_0^t[1-\e^{-2k_1(t-s)}]\vd s}\times P_t|\nabla f_n|^2\notag\\
  &\leq \frac{4}{t^2}\int_0^t\l(\e^{\frac{k_2-k_1}{2}(t-r)}-1\r)P_r|\nabla f_n|^2\,\vd r
    +\frac{4}{t^2}\int_0^t\l(1-\e^{-k_1(t-r)}\r)\E\l<\nabla f_n(X_r), \partrinv rt\nabla f_n(X_t)\r>\,\vd r\notag\\
  &\quad +\frac{4}{t^2}\int_0^t\E\l<\nabla f_n(X_r), \nabla P_{t-r}f_n(X_r)- \partrinv rt\nabla f_n(X_t)\r>\,\vd r.
\end{align}
Now letting $t\rightarrow 0$, by Lemma \ref{lem1} (ii), the terms on
the right-hand side become
$$-\Ric^Z(\nabla f, \nabla f)+k_1|\nabla f|^2.$$
For the terms on the left-hand side of \eqref{eq8}, we have the
following expansions:
\begin{align*}
  \frac{4}{t^2}\int_0^t\l(\e^{\frac{k_2-k_1}{2}(t-r)}-1\r)P_r|\nabla f_n|^2\,\vd r
  &=
    \frac{4}{t^2}\int_0^t\l(\e^{\frac{k_2-k_1}{2}(t-r)}-1\r)(|\nabla f_n|^2+\text{\rm o}(1))\,\vd r\\                     &=(k_2-k_1)|\nabla f|^2+\text{\rm o}(1);\\
  \frac{4}{t^2}\int_0^t\l(1-\e^{-k_1(t-r)}\r)\E\l<\nabla f_n(X_r), \partrinv rt\nabla f_n(X_t)\r>\,\vd r
  &=\frac{4}{t^2}\int_0^t\l(1-\e^{-k_1(t-r)}\r)(|\nabla f_n|^2+\text{\rm o}(1))\,\vd r\\
  &=2k_1|\nabla f|^2+\text{\rm o}(1);
\end{align*}
\begin{align*}
  \frac{4}{t^2}\int_0^t\E\l<\nabla f_n(X_r), \nabla P_{t-r}f_n(X_r)- \partrinv rt\nabla f_n(X_t)\r>\,\vd r
  &=\frac{4}{t^2}\int_0^t(\Ric^Z(\nabla f_n, \nabla f_n)(t-r)+\text{\rm o}(t)+\text{\rm o}(r))\,\vd r\\
  &=2\Ric^Z(\nabla f, \nabla f)+\text{\rm o}(1).
\end{align*}
Therefore, letting $t\rightarrow 0$ in \eqref{eq8}, we arrive at
$$-\Ric^Z(\nabla f, \nabla f)+k_1|\nabla f|^2\leq (-2\Ric^Z(\nabla f, \nabla f)+(k_2+k_1)|\nabla f|^2)\wedge 0,$$
i.e.,
$$k_1|\nabla f|^2\leq \Ric^Z(\nabla f, \nabla f)\leq k_2|\nabla f|^2.$$
The proof of ``(iii') implies (i)'' is similar. We skip the details
here.
\end{proof}

\begin{remark}\label{rem2}
  In the proof of Theorem \ref{th1}
  ``$\text{(ii) (ii')}\Rightarrow\text{(i)}$", we take into account
  that for $a$ and $b$ satisfying $a+b=1$, trivially
  $\lim_{t\rightarrow 0}(a\nabla f+b\nabla P_tf)=\nabla f$
  holds. However, when choosing $a=a_0$ as in \eqref{add-4} for the
  proof of inequality \eqref{add-3}, obviously $a_0$ depends on $t$,
  and thus we get
  \begin{align*}
    \lim_{t\rightarrow 0} &\left(a_0\nabla f+(1-a_0)\nabla P_t f\right)\\
                          &=\lim_{t\rightarrow 0} (\nabla f+ (1-a_0)(\nabla P_tf-\nabla f))\\
                          &=\nabla f-\lim_{t\rightarrow0}\frac{\l<\nabla f-\nabla P_tf, 2\left(\e^{\frac{k_2-k_1}{2}t}-1\right)\nabla f+\nabla P_tf-\e^{-k_1t}\E \partrinv0t\nabla f(X_t)\r>}{2\left(\e^{\frac{k_2-k_1}{2}t}-1\right)|\nabla f-\nabla P_tf|^2}\,(\nabla P_tf-\nabla f)\\
                          &=\nabla f+\lim_{t\rightarrow 0}\frac{\l<(\nabla Lf)t+\text{\rm o}(t), k_2\nabla f t+(\nabla Lf)t-(L\nabla f) t +\text{\rm o}(t)\r>}{(k_2-k_1)|\nabla Lf|^2t^3+\text{\rm o}(t^3)}(\nabla Lf) t\\
                          &=\nabla f+\frac{\l<\nabla Lf, k_2\nabla f+\nabla Lf-L\nabla f\r>}{(k_2-k_1)|\nabla Lf|^2}\nabla Lf\neq \nabla f.
  \end{align*}
  Actually, dividing both hands of inequality \eqref{add-2} by $2t$
  and letting $t\rightarrow 0$, we obtain
$$k_1|\nabla f|^2\leq \Ric(\nabla f,\nabla f)\leq k_2|\nabla f|^2-\frac{\l<\nabla Lf, k_2\nabla f+\nabla Lf-L\nabla f\r>^2}{(k_2-k_1)|\nabla Lf|^2}\ ({}\leq k_2|\nabla f|^2).$$
\end{remark}

\section{Pointwise characterizations of curvature
  bounds}\label{Sect:3}

Consider a Riemannian manifold $M$ possibly with non-empty boundary
$\partial M$, and let $X_t$ be a reflecting diffusion processes
generated by $L=\Delta+Z$. We assume that $X_t$ is non-explosive. It
is well known that the reflecting process $X_t$ can be constructed as
solution to the equation
$$\vd X_t=\sqrt{2}u_t\circ\vd B_t+Z(X_t)\vd t+N(X_t)\vd l_t,$$
where $u_t$ is a horizontal lift of $X_t$ to the orthonormal frame
bundle, $N$ the inward normal unit vector field on $\partial M$ and
$l_t$ the local time of $X_t$ supported on $\partial M$, see
\cite{Wbook2} for details. Again,
$$\partr rs=u_s\circ u_r^{-1}\colon T_{X_r}M\to T_{X_s}M,\quad r\leq s,$$
denotes parallel transport along $t\mapsto X_t$.  Finally, let $\II$
be the second fundamental form of the boundary:
\begin{align*}
  \II(X,Y)=-\l<\nabla_X N, Y\r>,\quad \text{$X,Y\in T_x\partial M,\ x\in\partial M$}.
\end{align*}

In this section, we extend the results of Section \ref{Sect:2} in
order to characterize pointwise bounds on $\Ric^Z$ and $\II$.  To this
end, for continuous functions $K_1, K_2, \sigma_1$ and $\sigma_2$ on
$M$, let
$$\varK_1(X_{[s,t]})=\int_s^tK_1(X_r)\,\vd r+\sigma_1(X_r)\,\vd l_r,\quad
\varK_2(X_{[s,t]})=\int_s^tK_2(X_r)\,\vd r+\sigma_2(X_r)\,\vd l_r$$
where $X_{[s,t]}=\{X_r:r\in [s,t]\}$.  Furthermore, let
$$C_N^{\infty}(M):=\{f\in C_0^{\infty}(M)\colon\,Nf|_{\partial M}=0\}.$$
Finally let
$$(P_tf)(x)=\E[f(X_t^x)],\quad f\in C_b(M),$$
be the semigroup with Neumann boundary conditions generated by $L$.

The result of this section can be presented as follows.

\begin{theorem}\label{th2}
  We keep the assumptions and notations from above.  Let
  $x\mapsto K_1(x)$ and $x\mapsto K_2(x)$ be two continuous functions
  on $M$ such that $K_1\leq K_2$. In addition, let
  $x\mapsto \sigma_1(x)$ and $x\mapsto \sigma_2(x)$ be two functions
  on $\partial M$ such that $\sigma_1\leq \sigma_2$.  Assume that
  \begin{align}\label{Icd}
    \E\big[\e^{-(2+\varepsilon)\varK_1(X_{[0,t]})}\big]<\infty, \quad
    \text{for some }\varepsilon>0\text{ and }t>0.
  \end{align}
  The following statements are equivalent:
  \begin{enumerate}[(i)]
  \item [\rm(i)] Curvature $\Ric^Z$ and second fundamental form $\II$
    satisfy the bounds
 $$K_1(x)\leq \Ric^Z(x)\leq K_2(x), \quad x\in M,\quad \mbox{and} \quad \sigma_1(x) \leq \II(x)\leq \sigma_2(x), \quad x\in \partial M.$$
\item [\rm(ii)] For $f\in C_N^{\infty}(M)$ and $t>0$,
  \begin{align*}
    \quad&|\nabla P_tf|^2-\E\left[\e^{-2\varK_1(X_{[0,t]})}|\nabla f|^2(X_t)\right]\\
         & \leq 4\l\{\l(\E\e^{\frac{1}{2}(\varK_2(X_{[0,t]})-\varK_1(X_{[0,t]}))}-1\r)|\nabla f|^2+\l<\nabla f, \nabla P_tf\r>-\l<\nabla f,\E\big[\e^{-\varK_1(X_{[0,t]})} \partrinv0t\nabla f(X_t)\big]\r>\r\}\wedge 0.
  \end{align*}
\item [\rm(ii')] For $f\in C_N^{\infty}(M)$ and $t>0$,
  \begin{align*}
    &|\nabla P_tf|^2-\E\e^{-2\varK_1(X_{[0,t]})}|\nabla f|^2(X_t)\\
    &\quad\leq 4\l\{\E\e^{\frac{1}{2}(\varK_2(X_{[0,t]})-\varK_1(X_{[0,t]}))}|\nabla P_tf|^2-\l<\nabla P_tf, \E\big[\e^{-\varK_1(X_{[0,t]})}\partrinv0t\nabla f(X_t)\big]\r>\r\}\wedge 0.
  \end{align*}
\item [\rm(iii)] For $f\in C_N^{\infty}(M)$, $p\in (1,2]$ and $t>0$,
  \begin{align*}
    & \frac{p(P_tf^2-(P_tf^{2/p})^p)}{4(p-1)}
      -\E\l[\int_0^t\e^{-2\varK_1(X_{[r,t]})}\,\vd r\times |\nabla f|^2(X_t)\r]\\
    &\quad\leq 4\int_0^t\l(\E\e^{\frac{1}{2}(\varK_2(X_{[r,t]})-\varK_1(X_{[r,t]}))}-1\r)P_r|\nabla f|^2\\
    &\qquad +\E\l<\nabla f(X_r),\nabla P_{t-r}f(X_r)-\e^{-\varK_1(X_{[r,t]})}\partrinv rt\nabla f(X_t)\r>\,\vd r\wedge 0.
  \end{align*}
\item [\rm(iii')] For $f\in C_N^{\infty}(M)$, $p\in (1,2]$ and $t>0$,
  \begin{align*}
    & \frac{p(P_tf^2-(P_tf^{2/p})^p)}{4(p-1)}-\E\l[\int_0^t\e^{-2\varK_1(X_{[r,t]})}\,\vd r\times |\nabla f|^2(X_t) \r]\\
    &\quad\leq 4\int_0^t\E\l[\e^{\frac{1}{2}(\varK_2(X_{[r,t]})-\varK_1(X_{[r,t]}))}\r]P_r|\nabla P_{t-r}f|^2-\E\l[\e^{-\varK_1(X_{[r,t]})}\l<\nabla f(X_r),\partrinv rt\nabla f(X_t)\r>\r]\,\vd r\wedge 0.
  \end{align*}
\item [\rm(iv)] For $f\in C_N^{\infty}(M)$ and $t>0$,
  \begin{align*}
    & \frac{1}{4}\left(P_t(f^2\log f^2)-P_tf^2\log P_tf^2\right)-\E\l[\int_0^t\e^{-2\varK_1(X_{[r,t]})}\,\vd r\times |\nabla f|^2(X_t) \r]\\
    &\quad\leq 4\int_0^t\l(\E\e^{\frac{1}{2}(\varK_2(X_{[r,t]})-\varK_1(X_{[r,t]}))}-1\r)P_r|\nabla f|^2\\
    &\qquad +\E\l<\nabla f(X_r),\nabla P_{t-r}f(X_r)-\e^{-\varK_1(X_{[r,t]})}\partrinv rt\nabla f(X_t)\r>\,\vd r\wedge 0.
  \end{align*}
\item [\rm(iv')] For $f\in C_N^{\infty}(M)$ and $t>0$,
  \begin{align*}
    & \frac14\left(P_t(f^2\log f^2)-P_tf^2\log P_tf^2\right)-\E\l[\int_0^t\e^{-2\varK_1(X_{[r,t]})}\,\vd r\times|\nabla f|^2(X_t) \r]\\
    &\quad\leq 4\int_0^t\E\l[\e^{\frac{1}{2}(\varK_2(X_{[r,t]})-\varK_1(X_{[r,t]}))}\r]\,P_r|\nabla P_{t-r}f|^2-\E\l[\e^{-\varK_1(X_{[r,t]})}\l<\nabla f(X_r),\partrinv rt\nabla f(X_t)\r>\r]\,\vd r\wedge 0.
  \end{align*}
\end{enumerate}
\end{theorem}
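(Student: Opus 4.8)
The plan is to follow the two-part scheme of the proof of Theorem~\ref{th1}, replacing the constants $k_1,k_2$ by the path functionals $\varK_1,\varK_2$ and carrying along the extra local-time terms produced by the reflection at $\partial M$; throughout, the integrability hypothesis \eqref{Icd}, together with the boundedness of $\nabla f$ for $f\in C_N^{\infty}(M)$ and H\"older's inequality, is what legitimises the various expectations, uses of Fubini's theorem, and interchanges of limit and integral. I would start from the Bismut-type derivative formula for the Neumann semigroup (see \cite{Wbook2}),
$$\nabla P_tf(x)=\E\bigl[Q_t\,\partrinv0t\nabla f(X_t)\bigr],\qquad f\in C_N^{\infty}(M),$$
where $Q_t\in\Aut(T_xM)$ solves the pathwise equation
$$\vd Q_t=-Q_t\bigl(\Ric^Z_{\partr0t}\,\vd t+\II_{\partr0t}\,\vd l_t\bigr),\qquad Q_0=\id_{T_xM},$$
with $\II_{\partr0t}:=\partrinv0t\circ\II_{X_t}\circ\partr0t$ extended by $0$ in the normal direction (the restriction $f\in C_N^{\infty}(M)$ is what makes $\partrinv0t\nabla f(X_t)$ stay tangent to $\partial M$, so that $\II$ applies to it), and likewise for the two-parameter family $Q_{s,t}$. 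The lower bounds $\Ric^Z\ge K_1$ and $\II\ge\sigma_1$ yield the pathwise operator estimate $\|Q_{s,t}\|\le\e^{-\varK_1(X_{[s,t]})}$, whence $|\nabla P_tf|^2\le\E[\e^{-2\varK_1(X_{[0,t]})}|\nabla f|^2(X_t)]$ and, using \eqref{eq4} together with It\^{o}'s formula applied to $(P_{t-s}f^{2/p})^p(X_s)$ and $(P_{t-s}f^2\log P_{t-s}f^2)(X_s)$ exactly as in the proof of Theorem~\ref{th1}, the ``floor'' terms subtracted on the left-hand sides of (ii)--(iv'), which encode the known lower-bound functional inequalities (cf.\ \cite{Wbook2,WW}).

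For the sharpened upper bound I set $A_t:=\exp\bigl(\tfrac{1}{2}(\varK_1(X_{[0,t]})+\varK_2(X_{[0,t]}))\bigr)$ (and the analogous $A_{s,t}$) and write
$$\id-A_tQ_t=\int_0^tA_sQ_s\Bigl(\Ric^Z_{\partr0s}-\tfrac{1}{2}(K_1+K_2)(X_s)\,\id\Bigr)\vd s+\int_0^tA_sQ_s\Bigl(\II_{\partr0s}-\tfrac{1}{2}(\sigma_1+\sigma_2)(X_s)\,\id\Bigr)\vd l_s.$$
Since $\|A_sQ_s\|\le A_s\e^{-\varK_1(X_{[0,s]})}=\exp\bigl(\tfrac{1}{2}(\varK_2-\varK_1)(X_{[0,s]})\bigr)$ and (i) gives $|\Ric^Z_{\partr0s}-\tfrac{1}{2}(K_1+K_2)(X_s)\,\id|\le\tfrac{1}{2}(K_2-K_1)(X_s)$ and $|\II_{\partr0s}-\tfrac{1}{2}(\sigma_1+\sigma_2)(X_s)\,\id|\le\tfrac{1}{2}(\sigma_2-\sigma_1)(X_s)$, the operator norm of the right-hand side is at most
$$\int_0^t\exp\bigl(\tfrac{1}{2}(\varK_2-\varK_1)(X_{[0,s]})\bigr)\Bigl(\tfrac{1}{2}(K_2-K_1)(X_s)\,\vd s+\tfrac{1}{2}(\sigma_2-\sigma_1)(X_s)\,\vd l_s\Bigr)=\exp\bigl(\tfrac{1}{2}(\varK_2-\varK_1)(X_{[0,t]})\bigr)-1,$$
the last equality because the integrand is exactly $\vd\bigl(\exp(\tfrac{1}{2}(\varK_2-\varK_1)(X_{[0,s]}))\bigr)$. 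Hence $\|\id-A_tQ_t\|\le\exp\bigl(\tfrac{1}{2}(\varK_2-\varK_1)(X_{[0,t]})\bigr)-1$, and similarly for $Q_{s,t},A_{s,t}$. With this pathwise bound, the algebra of Part~I of Theorem~\ref{th1} carries over with only notational changes: with $a+b=1$ one estimates $|2(a\nabla f+b\nabla P_tf)-Q_t\partrinv0t\nabla f(X_t)|$ by the triangle inequality, separating off $(\id-A_tQ_t)A_t^{-1}\partrinv0t\nabla f(X_t)$, then applies Cauchy's inequality and takes (conditional) expectations; the choices $(a,b)=(1,0)$ and $(a,b)=(0,1)$ give (ii) and (ii'), and inserting the conditional versions into the It\^{o} computations of steps (b) and (c) of that proof --- using the conditional Cauchy--Schwarz inequality and $P_{t-s}f^{2(2-p)/p}\le(P_{t-s}f^{2/p})^{2-p}$ --- followed by integration and expectation gives (iii), (iii'), (iv), (iv').

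\textbf{Part II ((ii)--(iv')$\Rightarrow$(i)).} This splits into an interior and a boundary estimate. At an interior point $x\in M\setminus\partial M$, a reflecting $L$-diffusion started at $x$ does not reach $\partial M$ before a strictly positive random time, so $l_t\equiv0$ for small $t$ and $\varK_i(X_{[0,t]})=K_i(x)\,t+\mathrm{o}(t)$; taking $f\in C_N^{\infty}(M)$ with $\nabla f(x)=X$, $|X|=1$, $\Hess_f(x)=0$ (and $f_n=f+n$ where needed), the short-time expansions of Lemma~\ref{lem1} and its proof apply verbatim with $k_i$ replaced by $K_i(x)$, and letting $t\to0$ in the suitably rescaled inequalities gives $K_1(x)\le\Ric^Z(X,X)\le K_2(x)$, i.e.\ $K_1(x)\le\Ric^Z(x)\le K_2(x)$. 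At a boundary point $x\in\partial M$ one takes $f\in C_N^{\infty}(M)$ with $\nabla f(x)=X\in T_x\partial M$, $|X|=1$, $\Hess_f(x)=0$; now the local time dominates, $\E[l_t]\asymp\sqrt t$ as $t\to0$, and one invokes the boundary short-time asymptotics of the reflecting diffusion (as in \cite{WW}, cf.\ also \cite{Wbook2}): in the short-time expansions of the quantities in (ii)--(iv') at $x$, the second fundamental form enters at order $\E[l_t]\asymp\sqrt t$ (through the local-time parts of $\varK_1,\varK_2$ and of the parallel-transport and derivative terms), whereas the Ricci curvature enters only at order $t$. Hence the interior $\mathrm{O}(t)$ contributions are negligible against the $\mathrm{O}(\sqrt t)$ boundary contributions; dividing by $\E[l_t]$ and letting $t\to0$ isolates the $\II$-terms and gives both $\II(X,X)\ge\sigma_1(x)$ (from non-positivity of the right-hand side) and $\II(X,X)\le\sigma_2(x)$ (from the other term in the minimum), i.e.\ $\sigma_1(x)\le\II(x)\le\sigma_2(x)$. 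Since $M\setminus\partial M$ is dense in $M$ and $\Ric^Z,K_1,K_2$ are continuous, the Ricci bound extends to $\partial M$ by continuity, so (i) holds. The implications (iii), (iii'), (iv), (iv')$\Rightarrow$(i) follow by the same bookkeeping, with (iv), (iv') treated as the limits $p\downarrow1$ of (iii), (iii'), exactly as in Part~II of Theorem~\ref{th1}.

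\textbf{Main obstacle.} The hard part is the boundary analysis in Part~II: extracting the $\II$-bound requires the precise short-time behaviour of the reflecting diffusion near a boundary point --- the $\sqrt t$-asymptotics of $l_t$ and of quantities such as $\E\langle\nabla f(X_r),\partrinv rt\nabla f(X_t)\rangle$ --- which is a genuinely finer expansion than the interior computation of Lemma~\ref{lem1}, and one must check carefully that all interior $\mathrm{O}(t)$ terms and the error terms are indeed negligible against $\mathrm{O}(\sqrt t)$. A second, pervasive technical point is that $K_1$ and $\sigma_1$ are only assumed continuous, not constant, so $Q_t$ and $A_t$ need not be bounded; the hypothesis \eqref{Icd} (via H\"older's inequality) has to be invoked repeatedly in Part~I to control the resulting expectations and to justify the passages to conditional expectations, Fubini's theorem, and the fact that the local martingales arising in the It\^{o} computations do not contribute after taking expectation.
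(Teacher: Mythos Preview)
Your proposal is correct and follows essentially the same route as the paper's proof: the same pathwise estimate $\|\id-A_tQ_t\|\le\exp\bigl(\tfrac12(\varK_2-\varK_1)(X_{[0,t]})\bigr)-1$ for Part~I, and the same interior/boundary dichotomy with $t^{-1}$ versus $t^{-1/2}$ rescaling for Part~II (the paper packages the latter as Lemmas~\ref{lem5}--\ref{II-form}). Two small points worth tightening: the paper's $Q_t$-equation carries an extra projection factor $(\id-\1_{\{X_t\in\partial M\}}P_{\partr0t})$ which you omitted (it is harmless for the norm bound since the projection has norm $\le1$, but it is the correct object from \cite{Wbook2}); and in the It\^o step for (iii)--(iv') you should make explicit that $N P_{t-s}f^{2/p}|_{\partial M}=0$ kills the $\vd l_s$-term, which is where $f\in C_N^\infty(M)$ is actually used beyond the derivative formula.
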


To prove the theorem, we need the following lemmas.
\begin{lemma}\label{lem5}
  ({\cite[Lemma 3.1.2]{Wbook2}}) Let $X_t^x$ be the reflecting
  diffusion process generated by $L$ such that $X_0=x$ and $l_t^x$ the
  corresponding local time on the boundary.
  \begin{enumerate}[\rm(i)]
  \item For any $x\in M$ and $r_0>0$, there exists a constant $c>0$
    such that
$$\P\{\sigma_r\leq t\}\leq \e^{-cr^2/t}, \quad \text{for all }r\in [0,r_0]\text{ and } t>0,$$
where $\sigma_r=\inf\{s\geq 0\colon\, \rho(x,X^x_s)\geq r\}$.
\item Let $x\in \partial M$ and $r$ as above. Then:
  \begin{enumerate}[\rm(a)]
  \item $\E^x[\e^{\lambda l_{t\wedge \sigma_r}}]<\infty$ for any
    $\lambda >0$ and there exists $c>0$ such that
    $\E^x[l_{t\wedge \sigma _r}^2]\leq c(t+t^2)$;
  \item
    $\E^x[l_{t\wedge \sigma_r}]=\frac{2\sqrt{t}}{\sqrt{\pi}}+\text{\rm
      o}(t^{1/2})$ holds for small $t>0$.
  \end{enumerate}
\end{enumerate}
\end{lemma}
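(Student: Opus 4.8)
\emph{Plan.} Both assertions are local near the starting point, and the plan is to track two scalar semimartingales obtained from $X_t$ via It\^o's formula: the radial process $\rho(x,X_t)$ for part~(i), and the distance--to--boundary process $\delta(X_t):=\rho(X_t,\partial M)$ for part~(ii). Throughout one works inside $B(x,r_0)\cap M$ with $r_0$ small enough that the relevant distance function is smooth there; then $|\nabla\rho(x,\cdot)|\equiv1$, respectively $|\nabla\delta|\equiv1$, by the eikonal equation, and $\nabla\delta=N$ on $\partial M$. Observe that if $x\in\partial M$, then $\rho(x,X_s)<r\le r_0$ forces $\delta(X_s)\le\rho(X_s,x)<r_0$, so $\sigma_r$ automatically keeps the process in a collar of $\partial M$; this is what makes part~(ii) compatible with the stopping at $\sigma_r$. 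The only analytic inputs are Laplacian comparison, used to bound drift terms, and local boundedness of $Z$.

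For part~(i) I would apply It\^o's formula not to $\rho_x:=\rho(x,\cdot)$ but to $\rho_x^2$, which is smooth at $x$ (squaring cancels the $(d-1)/\rho_x$ blow-up of $\Delta\rho_x$): this yields
$$\vd\,\rho_x^2(X_t)\le 2\sqrt2\,\rho_x(X_t)\,\vd\tilde\beta_t+C\bigl(1+\rho_x^2(X_t)\bigr)\,\vd t+2\rho_x(X_t)\langle\nabla\rho_x,N\rangle(X_t)\,\vd l_t,$$
where $\tilde\beta$ is a one-dimensional Brownian motion (using $|\nabla\rho_x|^2\equiv1$ to identify the martingale part of $\vd\rho_x$) and $C$ depends only on $r_0$, a Ricci lower bound on $B(x,r_0)$ and $\sup_{B(x,r_0)}|Z|$. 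If $x$ is at positive distance from $\partial M$ and $r$ is small, the $\vd l_t$--term does not appear before $\sigma_r$; if $x$ is near $\partial M$ one replaces $\rho_x^2$ by a standard local modification of it that stays comparable to $\rho_x^2$, has $L$--bounded, and has nonpositive normal derivative on $\partial M$ (possible at a small scale because $\II$ is bounded there), so that the $\vd l_t$--term has a favourable sign. Since the squared radial process then has drift dominated by $C(1+\rho_x^2)$ and quadratic variation dominated by a multiple of $\rho_x^2\,\vd t$, it is comparable to a squared-Bessel--type process; a standard exponential--martingale estimate gives $\E\exp\bigl(\lambda\,\rho_x^2(X_{t\wedge\sigma_r})\bigr)\le C_1$ for $\lambda=a/t$ with $a$ depending only on $C$, valid for $t$ in a fixed interval. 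Because $\{\sigma_r\le t\}=\{\rho_x^2(X_{t\wedge\sigma_r})\ge r^2\}$, Chebyshev's inequality yields $\P\{\sigma_r\le t\}\le C_1\,\e^{-ar^2/t}$, which is of the claimed form after relabelling the constant.

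For part~(ii), with $x\in\partial M$, the process $X_{\cdot\wedge\sigma_r}$ stays in a collar where $\delta$ is smooth, $|\nabla\delta|\equiv1$ and $\nabla\delta=N$ on $\partial M$, so It\^o's formula gives
$$\delta(X_{t\wedge\sigma_r})=\sqrt2\,\tilde\beta_{t\wedge\sigma_r}+\int_0^{t\wedge\sigma_r}L\delta(X_s)\,\vd s+l_{t\wedge\sigma_r},$$
with $\tilde\beta$ a standard Brownian motion and $|L\delta|\le C$ on the collar. Since $\delta(X_{\cdot\wedge\sigma_r})\ge0$ vanishes at time $0$ and $l$ is nondecreasing and carried by $\{\delta(X_{\cdot})=0\}$, Skorokhod's lemma identifies
$$l_{t\wedge\sigma_r}=\sup_{0\le s\le t\wedge\sigma_r}\Bigl(-\sqrt2\,\tilde\beta_s-\int_0^sL\delta(X_u)\,\vd u\Bigr)^{+},$$
so that $\bigl|\,l_{t\wedge\sigma_r}-\sqrt2\,\sup_{0\le s\le t\wedge\sigma_r}(-\tilde\beta_s)^{+}\bigr|\le Ct$. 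Then $\E\,\e^{\lambda l_{t\wedge\sigma_r}}\le\e^{\lambda Ct}\,\E\exp\bigl(\sqrt2\,\lambda\sup_{s\le t}|\tilde\beta_s|\bigr)<\infty$ for every $\lambda>0$ by the Gaussian tail of $\sup_{s\le t}|\tilde\beta_s|$, and $\E\,l_{t\wedge\sigma_r}^2\le c(t+t^2)$ follows from Doob's inequality $\E\sup_{s\le t}\tilde\beta_s^2\le4t$ and the $(Ct)^2$ error; this gives~(a). For~(b), the reflection principle gives $\E\sup_{s\le t}(-\tilde\beta_s)^{+}=\E|\tilde\beta_t|=\sqrt{2t/\pi}$; replacing $t\wedge\sigma_r$ by $t$ costs at most $\bigl(\E\sup_{s\le t}\tilde\beta_s^2\bigr)^{1/2}\P\{\sigma_r<t\}^{1/2}=O(\sqrt t)\,\e^{-cr^2/(2t)}=\mathrm{o}(\sqrt t)$ by part~(i), and the $Ct$ error is likewise $\mathrm{o}(\sqrt t)$, so $\E\,l_{t\wedge\sigma_r}=\sqrt2\cdot\sqrt{2t/\pi}+\mathrm{o}(\sqrt t)=\frac{2\sqrt t}{\sqrt\pi}+\mathrm{o}(\sqrt t)$.

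The hard part will be part~(i): taming simultaneously the singularity of the distance function $\rho_x$ at $x$ and the sign of the reflecting-boundary term near $\partial M$. Squaring $\rho_x$ removes the first obstruction, and passing to a suitably modified function with nonpositive normal derivative removes the second; once both are in place, everything reduces to the classical Gaussian/Skorokhod estimates above. (The bound in~(i), with $c$ depending on $r_0$, is needed only for $t$ in a fixed bounded range, which is all the short-time expansions used later require.)
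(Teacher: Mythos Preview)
The paper does not prove this lemma; it is quoted from \cite[Lemma~3.1.2]{Wbook2} and used as a black box, so there is no proof in the paper to compare against.

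Your sketch is the standard argument and is correct in outline. For~(i), applying It\^o's formula to~$\rho_x^2$ (to kill the singularity at the pole) and then running an exponential-martingale/Chebyshev estimate is exactly the usual route; the one genuinely technical point---arranging a comparison function with nonpositive normal derivative when $x$ lies on or near $\partial M$---you identify correctly as the crux, and your proposed fix (a local modification of $\rho_x^2$ using boundedness of $\II$) is the right idea, though in practice it is usually carried out explicitly in boundary normal (Fermi) coordinates rather than by an abstract perturbation. For~(ii), your use of It\^o's formula on $\delta=\rho(\cdot,\partial M)$, Skorokhod's lemma to express $l_{t\wedge\sigma_r}$ as a running supremum, the reflection-principle identity $\E\sup_{s\le t}(-\tilde\beta_s)^+=\sqrt{2t/\pi}$, and the Cauchy--Schwarz/part~(i) control of the $\{\sigma_r<t\}$ error are all correct and yield the stated constants. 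The only cosmetic remark is that your bound in~(i) is derived for $t$ bounded, which is all the paper needs; extending it to all $t>0$ (as stated) just requires absorbing a constant prefactor into $c$.
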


By means of Lemma \ref{lem5}, we can derive pointwise formulae for
$\Ric^Z$ and $\II$.
\begin{lemma}\label{lemma6}
  Let $x\in \mathring{M}=: M\setminus \partial M$ and $X\in T_xM$ with
  $|X|=1$.  Let $f\in C_0^{\infty}(M)$ such that $Nf|_{\partial M}=0$,
  $\Hess_f(x)=0$ and $\nabla f(x)=X$ and let $f_n=f+n$ for $n\geq
  1$. Then all assertions of Lemma \ref{lem1} hold.
\end{lemma}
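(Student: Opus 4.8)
The plan is to reduce each of the four identities to its boundaryless counterpart in Lemma \ref{lem1}, using that an interior point is not felt by the reflecting diffusion in short time. First I would fix $r_0>0$ so small that the closed ball $\bar B_{2r_0}:=\{y\in M\colon\rho(x,y)\le 2r_0\}$ lies in $\mathring M$, and for $0<r\le r_0$ put $\sigma_r=\inf\{s\ge0\colon\rho(x,X_s^x)\ge r\}$ as in Lemma \ref{lem5}(i). On the event $\{\sigma_r>t\}$ the reflecting path $(X_s^x)_{s\le t}$ together with its horizontal lift never charges $\partial M$ (so $l_s^x=0$, $s\le t$) and agrees, in law, with the free $L$-diffusion run on the boundaryless open manifold $U:=\{\rho(x,\cdot)<r\}\subset\mathring M$ (killed on leaving $U$); since Lemma \ref{lem1} is of a purely local, short-time nature, its formulae apply verbatim to this free diffusion. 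Finally, Lemma \ref{lem5}(i) supplies $c>0$ with $\P\{\sigma_r\le t\}\le\e^{-cr^2/t}$, so this probability is $\mathrm{o}(t^n)$ as $t\downarrow0$, for every $n\in\N$.

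Next I would compare, term by term, the quantities entering Lemma \ref{lem1}. Those of ``semigroup type'' --- $P_t|\nabla f|^p(x)$, $P_tf_n^2(x)$, $(P_tf_n^{2/p})^p(x)$, $P_t(f_n^2\log f_n^2)(x)$, $\log P_tf_n^2(x)$, and $\langle\nabla f,\E[\partrinv0t\nabla f(X_t)]\rangle(x)$ --- are each an expectation $\E[\Psi_t]$ with $\Psi_t$ bounded by a constant (for $n>\|f\|_\infty$ the functions $f_n^2$, $f_n^2\log f_n^2$, $\log P_tf_n^2$ are bounded, $f$ has compact support, and parallel transport is an isometry), and on $\{\sigma_r>t\}$ the variable $\Psi_t$ coincides with the corresponding functional of the free diffusion. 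Splitting over $\{\sigma_r>t\}$ and its complement therefore yields $\E[\Psi_t]=(\text{boundaryless value})+O(\P\{\sigma_r\le t\})=(\text{boundaryless value})+\mathrm{o}(t^n)$ for every $n$. For the two quantities built from $\nabla P_tf(x)$, namely $|\nabla P_tf|^p(x)$ and $\langle\nabla P_tf,\E[\partrinv0t\nabla f(X_t)]\rangle(x)$, I would set $w(s,\cdot):=P_sf-\hat P_sf$ with $\hat P_s$ the semigroup of the free diffusion on $U$, observe that $w$ solves $\partial_sw=Lw$ on $(0,\infty)\times U$ with $\sup_U|w(s,\cdot)|\le 2\|f\|_\infty\,\e^{-cr^2/s}$ by the same coupling bound applied to $f$, and invoke interior parabolic estimates on cylinders of parabolic size $\sqrt t$ about $(t,x)$: $|\nabla w(t,x)|\le Ct^{-1/2}\sup_{s\in(t/2,t]}\sup_U|w(s,\cdot)|=\mathrm{o}(t^n)$, whence $\nabla P_tf(x)=\nabla\hat P_tf(x)+\mathrm{o}(t^n)$.

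With these comparisons in place, every difference quotient occurring in Lemma \ref{lem1}(i)--(iv) --- the divisors being $t$ or $t^2$ --- differs from the quotient formed with the free semigroup and free diffusion by $\mathrm{o}(1)$ as $t\downarrow0$, uniformly over $n>\|f\|_\infty$, so the iterated limits in $t$ and then $n$ are unchanged. Since for the boundaryless objects the four identities are exactly \cite[Theorem 2.2.4]{Wbook2} and the Taylor computation in the proof of Lemma \ref{lem1}, all assertions of Lemma \ref{lem1} hold at the interior point $x$, which is the claim.

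The step I expect to be the main obstacle is the control of the $\{\sigma_r\le t\}$-contribution for the terms carrying $\nabla P_tf$: there the multiplicative functional $Q_t$ behind the derivative formula acquires a second-fundamental-form term along the boundary local time, so one cannot dispose of the error by mere uniform boundedness on the whole sample space. The interior parabolic estimate above sidesteps this; alternatively one keeps the Bismut--Neumann representation but stops it at $\sigma_r$, bounding the error via $\|Q_\tau\|\le\e^{\tau c_0}$ (with $\tau=t\wedge\sigma_r$ and $c_0=\sup_{\bar B_{r_0}}\|\Ric^Z\|$) together with the local boundedness of $\nabla P_sf$ away from $\partial M$ for $s\le1$ --- or, without stopping, via Cauchy--Schwarz and the exponential local-time moment bound of Lemma \ref{lem5}(ii)(a). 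All remaining bookkeeping is identical to the proof of Lemma \ref{lem1}.
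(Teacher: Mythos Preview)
Your argument is correct and follows the same localization strategy as the paper: choose $r>0$ with $B(x,r)\subset\mathring M$ and use the super-polynomial bound $\P\{\sigma_r\le t\}\le\e^{-cr^2/t}$ from Lemma~\ref{lem5}(i) to reduce all short-time expansions to the boundaryless computations of Lemma~\ref{lem1}. The paper's proof is a one-line sketch that replaces $t$ by $t\wedge\sigma_r$ and defers the details to \cite[Theorem~3.2.3]{Wbook2}; your event-splitting and the use of interior parabolic estimates for the $\nabla P_tf$ terms (in place of the stopped derivative formula) are an equivalent implementation of the same idea.
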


\begin{proof}
  Let $r>0$ be such that $B(x,r)\subset \mathring{M}$ and
  $|\nabla f|\geq \frac{1}{2}$ on $B(x,r)$.  Due to Lemma \ref{lem5},
  the proof of Lemma \ref{lem1} applies to the present situation,
  using $t\wedge \sigma_r$ to replace $t$, so that the boundary
  condition is avoided.  We refer the reader to the proof of
  \cite[Theorem 3.2.3]{Wbook2} for more explanation.
\end{proof}

\begin{lemma}\label{II-form}
  Let $x\in \partial M$ and $X\in T_xM$ with $|X|=1$.
  \begin{enumerate}[\rm(1)]
  \item For any $f\in C_0^{\infty}(M)$ such that $\nabla f(x)=X$, and
    for any $p>0$, we have
    \begin{align}
      \II(X,X)&=\lim_{t\downarrow 0}\frac{\sqrt{\pi}}{2p\sqrt{t}}\l\{P_{t}|\nabla f|^p-|\nabla f|^p\r\}(x)\nonumber\\
              &=\lim_{t\downarrow
                0}\frac{\sqrt{\pi}}{2p\sqrt{t}}\l\{P_{t}|\nabla f|^p-|\nabla P_{t}f|^p\r\}(x)\notag\\
              &=\lim_{t\rightarrow0}\frac{\sqrt
                {\pi}\l\{\l<\nabla f, \E \partrinv0t\nabla f(X_t)\r>-\l<\nabla f, \nabla P_t f\r>\r\}(x)}{2\sqrt{t}}\label{add-eq-10}\\
              &=\lim_{t\rightarrow 0}\frac{\sqrt{\pi}\l\{\l<\nabla P_tf, \E \partrinv0t\nabla f(X_t)\r>-|\nabla P_tf|^2\r\}(x)}{2\sqrt{t}}.\label{add-eq-11}
    \end{align}
  \item If moreover $f>0$, then for any $p\in [1,2]$,
    \begin{align*}
      \II(X,X)&=-\lim_{t\downarrow
                0}\frac{3}{8}\sqrt{\frac{\pi}{t}}\l\{|\nabla f|^2+\frac{p[(P_{t}f^{2/p})^p-P_{t}f^2]}{4(p-1)t}\r\}(x)\\
              &=-\lim_{t\downarrow
                0}\frac{3}{8}\sqrt{\frac{\pi}{t}}\l\{|\nabla P_{t}f|^2+\frac{p[(P_{t}f^{2/p})^p-P_{t}f^2]}{4(p-1)t}\r\}(x),
    \end{align*}
    where when $p=1$, we interpret the quotient
    $\displaystyle\frac{(P_{t}f^{2/p})^p-P_{t}f^2}{p-1}$ as the limit
    \begin{align*}
      \lim_{p\downarrow
      1}\frac{(P_{t}f^{2/p})^p-P_{t}f^2}{p-1}=(P_{t}f^2)\log P_{t}f^2- P_{t}(f^2\log f^2).
    \end{align*}
  \end{enumerate}
\end{lemma}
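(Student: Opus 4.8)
The proof follows the localization scheme behind \cite[Theorem~3.2.3]{Wbook2} and Lemma~\ref{lemma6}; the new feature is that at a boundary point the local time enters at order $\sqrt t$ rather than $t$. The plan is first to record the boundary analogue of the Taylor expansions underlying Lemma~\ref{lem1}. Fix $r>0$ so small that $B(x,r)\subset M$ and $|\nabla f|\geq\frac12$ on $B(x,r)$, and let $\sigma_r$ be as in Lemma~\ref{lem5}. For $h$ smooth on $B(x,r)$, It\^o's formula for $h(X_{s\wedge\sigma_r})$ together with expectation gives, after discarding the martingale part,
$$\E h(X_{s\wedge\sigma_r})=h(x)+\E\l[\int_0^{s\wedge\sigma_r}Lh(X_u)\,\vd u\r]+\E\l[\int_0^{s\wedge\sigma_r}Nh(X_u)\,\vd l_u\r].$$
The $\vd u$-term is $O(s)=\text{\rm o}(\sqrt s)$; the boundary term equals $Nh(x)\,\E[l_{s\wedge\sigma_r}]+\text{\rm o}(\sqrt s)$, the error being $\lesssim\E[\sup_{u\leq s\wedge\sigma_r}\rho(x,X_u)^2]^{1/2}\,\E[l_{s\wedge\sigma_r}^2]^{1/2}=O(s)$ by Lemma~\ref{lem5}. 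Since $\E[l_{s\wedge\sigma_r}]=\frac{2}{\sqrt\pi}\sqrt s+\text{\rm o}(\sqrt s)$ and, by Lemma~\ref{lem5}(i), passing from $t\wedge\sigma_r$ back to $t$ costs only an error $O(\e^{-cr^2/t})$, we obtain
$$P_sh(x)=h(x)+\frac{2}{\sqrt\pi}\sqrt s\,Nh(x)+\text{\rm o}(\sqrt s),\qquad \int_0^tP_sh(x)\,\vd s=t\,h(x)+\frac{4}{3\sqrt\pi}\,t^{3/2}Nh(x)+\text{\rm o}(t^{3/2}).$$
Moreover, since $Nf|_{\partial M}=0$ the field $\nabla f$ is tangential to $\partial M$ along $\partial M$, and differentiating $\l<\nabla f,N\r>\equiv0$ in the direction $\nabla f$ while using symmetry of $\Hess_f$ gives $\l<\nabla_N\nabla f,\nabla f\r>(x)=-\l<\nabla_XN,X\r>=\II(X,X)$; hence $N|\nabla f|^p(x)=p\,\II(X,X)$ (recall $|\nabla f|(x)=1$) and $N|\nabla f|^2(x)=2\,\II(X,X)$.

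For part~(1), taking $h=|\nabla f|^p$ in the first displayed expansion gives the first identity directly. For the other three I would invoke the reflecting-diffusion Bismut formula $\nabla P_tf(x)=\E[Q_t\partrinv0t\nabla f(X_t)]$, where now $\vd Q_t=-Q_t\Ric^Z_{\partr0t}\,\vd t-Q_t\mathcal S_{\partr0t}\,\vd l_t$ with $\mathcal S$ the shape operator (see \cite{Wbook2}), so that $\id-Q_t=\Ric^Z(x)\,t+\mathcal S_x\,l_t+\text{\rm o}(\sqrt t)$; an analogous It\^o computation along $X_{t\wedge\sigma_r}$ gives $\E[\partrinv0t\nabla f(X_t)]=\nabla f(x)+\frac{2}{\sqrt\pi}\sqrt t\,\nabla_N\nabla f(x)+\text{\rm o}(\sqrt t)$. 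Combining the two yields $\E[\partrinv0t\nabla f(X_t)]-\nabla P_tf(x)=\E[(\id-Q_t)\partrinv0t\nabla f(X_t)]=\frac{2}{\sqrt\pi}\sqrt t\,\mathcal S_xX+\text{\rm o}(\sqrt t)$ and $\nabla P_tf(x)=\nabla f(x)+\frac{2}{\sqrt\pi}\sqrt t\,(\nabla_N\nabla f(x)-\mathcal S_xX)+\text{\rm o}(\sqrt t)$. Pairing the former with $\nabla f(x)=X$, resp.\ with $\nabla P_tf(x)=X+\text{\rm o}(1)$, and using $\l<\mathcal S_xX,X\r>=\II(X,X)$ proves \eqref{add-eq-10} and \eqref{add-eq-11}; pairing the latter with $X$ and using $\l<\nabla_N\nabla f(x),X\r>=\II(X,X)=\l<\mathcal S_xX,X\r>$ shows $|\nabla P_tf|^2(x)-|\nabla f|^2(x)=\text{\rm o}(\sqrt t)$, which promotes the first identity of part~(1) to the second.

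For part~(2), I would repeat the computation of Theorem~\ref{th1}(b): by It\^o's formula for $(P_{t-s}f^{2/p})^p(X_s)$, whose boundary integral vanishes since $N(P_{t-s}f^{2/p})|_{\partial M}=0$ for $s<t$,
$$\frac{p\big(P_tf^2-(P_tf^{2/p})^p\big)}{4(p-1)}(x)=\frac{p^2}{4}\int_0^tP_s\Phi_{t-s}(x)\,\vd s,\qquad \Phi_u:=(P_uf^{2/p})^{p-2}\,|\nabla P_uf^{2/p}|^2,$$
with $\Phi_0=\frac{4}{p^2}|\nabla f|^2$. As $f^{2/p}\in C_N^\infty(M)$, $\sup_{u\leq t}\|\Phi_u-\Phi_0\|_\infty\to0$, so the right-hand side is $\int_0^tP_s|\nabla f|^2(x)\,\vd s+\text{\rm o}(t)$; feeding in the second displayed expansion with $h=|\nabla f|^2$ and $N|\nabla f|^2(x)=2\,\II(X,X)$ gives
$$|\nabla f|^2(x)+\frac{p\big[(P_tf^{2/p})^p-P_tf^2\big]}{4(p-1)t}(x)=-\frac{8}{3\sqrt\pi}\,\sqrt t\,\II(X,X)+\text{\rm o}(\sqrt t),$$
which, multiplied by $-\frac38\sqrt{\pi/t}$, is the first identity of part~(2); the second follows on adding $|\nabla P_tf|^2(x)-|\nabla f|^2(x)=\text{\rm o}(\sqrt t)$ from part~(1), and $p=1$ is the limit $p\downarrow1$ with the quotient interpreted as in the statement.

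The main obstacle is one of bookkeeping: making every $\text{\rm o}(\sqrt t)$-error uniform --- this leans on the local-time bounds $\E[l_{t\wedge\sigma_r}^2]=O(t)$ and $\P\{\sigma_r\leq t\}\leq\e^{-cr^2/t}$ of Lemma~\ref{lem5}, and on $\sup_{u\leq t}\|\Phi_u-\Phi_0\|_\infty\to0$ --- and, crucially, keeping track of the Neumann condition $Nf|_{\partial M}=0$, without which a spurious $\sqrt t$-term (proportional to $X(Nf)(x)$) survives in $\l<\nabla_N\nabla f,\nabla f\r>(x)-\II(X,X)$ and in $|\nabla P_tf|^2(x)-|\nabla f|^2(x)$; it is precisely the vanishing of these terms that makes the several formulae of the lemma equivalent.
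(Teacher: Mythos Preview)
Your approach is essentially the same as the paper's, which is itself very terse: it refers to \cite[Theorem~3.2.4]{Wbook2} for everything except \eqref{add-eq-10}--\eqref{add-eq-11}, and for those two formulae proceeds exactly as you do --- apply It\^o's formula to $\partrinv0t\nabla f(X_t)$ (with boundary term $\nabla_N(\nabla f)\,\vd l_s$) and extract the $\sqrt t$-coefficient from Lemma~\ref{lem5}(ii)(b). Your use of the Bismut representation to isolate $\E[(\id-Q_t)\partrinv0t\nabla f(X_t)]$ is an equivalent and slightly cleaner reorganization of the same computation.

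There is one slip in your argument for part~(2). From ``$\sup_{u\leq t}\|\Phi_u-\Phi_0\|_\infty\to0$'' you only conclude that replacing $\Phi_{t-s}$ by $\Phi_0$ in $\frac{p^2}{4}\int_0^tP_s\Phi_{t-s}\,\vd s$ costs $o(t)$; after dividing by $t$ and subtracting $|\nabla f|^2$ this leaves an error $o(1)$, not the $o(\sqrt t)$ you need for the final display. What is actually required is $\|\Phi_u-\Phi_0\|_\infty=o(\sqrt u)$, so that $\int_0^t P_s(\Phi_{t-s}-\Phi_0)\,\vd s=o(t^{3/2})$. This refinement follows from your own part~(1) analysis: under the Neumann condition, the $\sqrt u$-coefficient in $|\nabla P_ug|^2-|\nabla g|^2$ vanishes (both $\l<\nabla_N\nabla g,\nabla g\r>$ and $\l<\mathcal S\nabla g,\nabla g\r>$ equal $\II(\nabla g,\nabla g)$ and cancel), while $(P_ug)^{p-2}-g^{p-2}=O(u)$, giving $\Phi_u-\Phi_0=o(\sqrt u)$ uniformly on compacta. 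With this correction your argument goes through.
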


\begin{proof}
  We only need to prove formulas \eqref{add-eq-10} and
  \eqref{add-eq-11}.  For the remaining statements we refer to
  \cite[Theorem 3.2.4]{Wbook2}.  Let $r>0$ such that
  $|\nabla f|\geq 1/2$ on $B(x,r)$, and let
  $\sigma_r:=\inf\{s\geq0: X_s\notin B(x,r)\}$. Then, by It\^o's
  formula and Lemma \ref{lem5}, we get
  \begin{align*}
    \E\big[\partrinv0t\nabla f(X_t)\big]
    &= \nabla f(x)+\E\l[\int_0^{t\wedge\sigma_r}\partrinv0s (\square+\nabla_Z) (\nabla f)(X_s)\,\vd s+ \partrinv0s\nabla_N(\nabla f)(X_s)\,\vd l_s\r]+\text{\rm o}(t)
  \end{align*}
  where $\square=-\nabla^*\nabla$ is the connection Laplacian (or
  rough Laplacian) acting on $\Gamma(TM)$.

  Along with Lemma \ref{lem5} (ii) (b), the formulae in
  \eqref{add-eq-10} and \eqref{add-eq-11} are obtained by taking into
  account the expansions:
 $$\big<\E\big[\partrinv0t\nabla f(X_t)\big], \nabla f\big>=|\nabla f|^2+\II(\nabla f, \nabla f)\frac{2\sqrt{t}}{\sqrt{\pi}}+\text{\rm o}(\sqrt{t}),$$
 resp.
 \begin{equation*}
   \big<\E\big[\partrinv0t\nabla f(X_t)\big], \nabla P_tf\big>=|\nabla f|^2+\II(\nabla f, \nabla f)\frac{2\sqrt{t}}{\sqrt{\pi}}+\text{\rm o}(\sqrt{t}).
   \qedhere
 \end{equation*}
\end{proof}

\begin{proof}[Proof of Theorem \ref{th2}.]
  Let $\Ric^Z(x)\geq K_1(x)$ and $\II(x)\geq
  \sigma_1(x)$. Furthermore, assume that
  \begin{align*}
    \E\left[\e^{-(2+\varepsilon)\varK_1(X_{[0,t]})}\right]<\infty,\quad\text{for some } \varepsilon>0 \text{ and } t>0.
  \end{align*}
  By \cite[Theorem 4.1.1]{Wbook2}, there exists a unique two-parameter
  family of random endomorphisms $Q_{s,t}\in\End(T_{X_s}M)$ solving,
  for $s\geq0$ fixed, the following equation in $t\geq s$,
$$\vd Q_{s,t}=-Q_{s,t}\left(\Ric_{\partr st}^Z\,\vd t+\II_{\partr st}\,\vd l_t\right)
(\id-\1_{\{X_t\in\partial M\}}P_{\partr st}),\quad Q_{s,s}=\id,$$
where by definition, for
$u\in \partial\OM:= \{u\in\OM: {\bf p}u\in \partial M\}$,
$$P(uy, uz)=\l<uy, N\r>\l<uz, N\r>,\quad y, z\in \R^d.$$
Recall that
$$\Ric^Z_{\partr st}=\partrinv st\circ\Ric^Z_{X_t}\circ\partr st,\quad
\II_{\partr st}=\partrinv st\circ\II_{X_t}\circ\partr st,\quad
P_{\partr st}=\partrinv st\circ P_{X_t}\circ\partr st,
$$
where as usual bilinear forms on $TM$, resp. on $T\partial M$, are
understood fiberwise as linear endomorphisms via the metric.
Moreover, by \cite[Theorem 3.2.1]{Wbook2}, we have
\begin{align}\label{Eq:GradEst}
  \nabla P_{t-s}f(X_s)=\partr 0s\E[\partrinv 0s Q_{s,t}\partrinv st\nabla f(X_t)|\mathscr{F}_s].
\end{align}
By using derivative formula~\eqref{Eq:GradEst}, the proofs are similar
to that of Theorem \ref{th1}.  We only prove the equivalence
``$\text{(i)}\Leftrightarrow\text{(ii)}$ or (iii)'' to explain the
idea.\smallskip

``$\text{(i)}\Rightarrow\text{(ii)}$'':\ \ First, from the derivative
formula and the lower bound on the curvature, we get
\begin{align}\label{eq2-1}
  |\nabla P_tf|^2\leq \E\left[\e^{-2\varK_1(X_{[0,t]})}|\nabla f|^2(X_t)\right].
\end{align}
Next, it is easy to see that
\begin{align}\label{eq1-0}
  2\nabla f&-Q_t\partrinv0t\nabla f(X_t)\notag\\
           &=2\nabla f-\e^{-\frac{1}{2}\left(\varK_2(X_{[0,t]})+\varK_1(X_{[0,t]})\right)}\partrinv0t\nabla f(X_t)\notag\\
           &\quad +\left(\e^{-\frac{1}{2}\left(\varK_2(X_{[0,t]})+\varK_1(X_{[0,t]})\right)}\id-Q_t\right)\partrinv0t\nabla f(X_t)
\end{align}
where $Q_t:=Q_{0,t}$, which implies that
\begin{align*}
  &\left|2\nabla f-Q_t\partrinv0t\nabla f(X_t)\right|\\
  &\qquad\leq \left|2\nabla f-\e^{-\frac{1}{2}\left(\varK_2(X_{[0,t]})+\varK_1(X_{[0,t]})\right)}\partrinv0t\nabla f(X_t)\right|\\
  &\qquad\quad+\left|(\e^{-\frac{1}{2}\left(\varK_2(X_{[0,t]})+\varK_1(X_{[0,t]})\right)}\id-Q_t)\partrinv0t\nabla f(X_t)\right|.
\end{align*}
We start by estimating the last term on the right-hand side,
\begin{align*}
  &\left|\left(\e^{-\frac{1}{2}\left(\varK_2(X_{[0,t]})+\varK_1(X_{[0,t]})\right)}\id-Q_t\right)\partrinv0t\nabla f(X_t)\right|\\
  &\quad\leq \e^{-\frac{1}{2}\left(\varK_2(X_{[0,t]})+\varK_1(X_{[0,t]})\right)}\left\|\id-\e^{\frac{1}{2}\left(\varK_2(X_{[0,t]})+\varK_1(X_{[0,t]})\right)}Q_t\right\|  |\nabla f(X_t)|.
\end{align*}
Observe that we may rewrite 
\begin{align*}
  \id&-\e^{\frac{1}{2}\left(\varK_2(X_{[0,t]})+\varK_1(X_{[0,t]})\right)}Q_t\\
     &=-\int_0^t\frac{\vd \big[\e^{\frac{1}{2}\left(\varK_2(X_{[0,s]})+\varK_1(X_{[0,s]})\right)}Q_{s}\big]}{\vd s}\vd s\\
     &=\int_0^t \e^{\frac{1}{2}\left(\varK_2(X_{[0,s]})+\varK_1(X_{[0,s]})\right)}Q_{s}\Bigg[\l(\Ric_{\partr0s}^Z-\frac{K_1(X_s)+K_2(X_s)}{2}\id\r)\left(\id-\1_{\{X_s\in\partial M\}}P_{\partr0s}\right)\,\vd s\\
     &\qquad +\l(\II_{\partr0s}-\frac{\sigma_1(X_s)+\sigma_2(X_s)}{2}\id\r)\left(\id-\1_{\{X_s\in\partial M\}}P_{\partr0s}\right)\,\vd l_s\Bigg].
\end{align*}
Thus we get
\begin{align*}
  &\left\|\id-\e^{\frac{1}{2}\left(\varK_2(X_{[0,t]})+\varK_1(X_{[0,t]})\right)}Q_t\right\|\\
  &\quad\leq \int_0^t \e^{\frac{1}{2}\left(\varK_2(X_{[0,s]})+\varK_1(X_{[0,s]})\right)}\|Q_{s}\|\bigg(\l|\Ric_{\partr0s}^Z-\frac{K_1(X_s)+K_2(X_s)}{2}\id\r|\,\vd s\\
  &\qquad +\l|\II_{\partr0s}-\frac{\sigma_1(X_s)+\sigma_2(X_s)}{2}\id\r|\,\vd l_s\bigg)\\
  &\quad\leq \int_0^t \e^{\frac{1}{2}\left(\varK_2(X_{[0,s]})-\varK_1(X_{[0,s]})\right)} \l(\frac{K_2(X_s)-K_1(X_s)}{2}\,\vd s+\frac{\sigma_2(X_s)-\sigma_1(X_s)}{2}\,\vd l_s\r)\\
  &\quad= \e^{\frac{1}{2}\left(\varK_2(X_{[0,t]})-\varK_1(X_{[0,t]})\right)}-1,
\end{align*}
which implies
\begin{align*}
  &\l|2\nabla f-Q_t\partrinv0t\nabla f(X_t)\r|^2\\
  & \quad\leq \bigg[\l|2\nabla f-\e^{-\frac{1}{2}\left(\varK_2(X_{[0,t]})+\varK_1(X_{[0,t]})\right)}\partrinv0t\nabla f(X_t)\r|\\
  &\qquad +\e^{-\frac{1}{2}\left(\varK_2(X_{[0,t]})+\varK_1(X_{[0,t]})\right)}\left(\e^{\frac{1}{2}(\varK_2(X_{[0,t]})-\varK_1(X_{[0,t]}))}-1\right)|\nabla f|(X_t)\bigg]^2\\
  &\quad\leq \e^{\frac{1}{2}\left(\varK_2(X_{[0,t]})-\varK_1(X_{[0,t]})\right)}\l|2\nabla f-\e^{-\frac{1}{2}(\varK_2(X_{[0,t]})+\varK_1(X_{[0,t]}))}\partrinv0t\nabla f(X_t)\r|^2\\
  &\qquad +\e^{-\left(\varK_2(X_{[0,t]})+\varK_1(X_{[0,t]})\right)}\left(\e^{\frac{1}{2}\left(\varK_2(X_{[0,t]})-\varK_1(X_{[0,t]})\right)}-1\right)\e^{\frac{1}{2}\left(\varK_2(X_{[0,t]})-\varK_1(X_{[0,t]})\right)}|\nabla f|^2(X_t)\\
  & \quad= 4\e^{\frac{1}{2}\left(\varK_2(X_{[0,t]})-\varK_1(X_{[0,t]})\right)}|\nabla f|^2-4\e^{-\varK_1(X_{[0,t]})}\l<\nabla f, \partrinv0t\nabla f(X_t)\r> +\e^{-2\varK_1(X_{[0,t]})}|\nabla f|^2(X_t).
\end{align*}
By expanding the terms above, we get
\begin{align*}
  |Q_t&\partrinv0t\nabla f(X_t)|^2-\e^{-2\varK_1(X_{[0,t]})}|\nabla f|^2(X_t)\\
      &\leq 4\left(\e^{\frac{1}{2}\left(\varK_2(X_{[0,t]})-\varK_1(X_{[0,t]})\right)}-1\right)|\nabla f|^2+4\l<\nabla f, Q_t\partrinv0t\nabla f(X_t)\r>\\
      &\qquad -4\e^{-\varK_1(X_{[0,t]})}\l<\nabla f, \partrinv0t\nabla f(X_t)\r>.
\end{align*}
We observe that
$|\nabla P_tf|^2\leq \E[|Q_t\partrinv0t\nabla f(X_t)|^2]$ and take
expectation on both sides of the inequality above, to obtain
\begin{align*}
  &|\nabla P_tf|^2-\E\left[\e^{-2\varK_1(X_{[0,t]})}|\nabla f|^2(X_t)\right]\\
  &\leq 4\left(\e^{\frac{1}{2}\left(\varK_2(X_{[0,t]})-\varK_1(X_{[0,t]})\right)}-1\right)|\nabla f|^2+4\E\l<\nabla f, \nabla P_tf-\e^{-\varK_1(X_{[0,t]})}\partrinv0t\nabla f(X_t)\r>.
\end{align*}
Combining this with \eqref{eq2-1} completes the proof of
``$\text{(i)}\Rightarrow\text{(ii)}$''.  \smallskip

``$\text{(i)}\Rightarrow\text{(iii)}$'': It is well known that if
$f\in C_{N}^{\infty}(M)$, then $NP_tf=0$ for $t>0$.  Combined with
It\^o's formula, we obtain
\begin{align*}
  \vd (P_{t-s}f^{2/p})^p(X_s)&=\vd M_s+(L+\partial_s)(P_{t-s}f^{2/p}(X_{s}))^p\,\vd s\notag\\
                             &=\vd M_s+p(p-1)(P_{t-s}f^{2/p}(X_{s}))^{p-2}|\nabla P_{t-s}f^{2/p}|^2(X_s)\,\vd s\notag\\
                             &\quad +p(P_{t-s}f^{2/p})^{p-1}NP_{t-s}f^{2/p}(X_{s})\,\vd l_s\\
                             &=\vd M_s+p(p-1)(P_{t-s}f^{2/p}(X_{s}))^{p-2}|\nabla P_{t-s}f^{2/p}|^2(X_s)\,\vd s
\end{align*}
where $M_s$ is a local martingale. The rest of the argument is then
similar to the proof of Theorem~\ref{th1}; we skip it here.

``(ii) $\Rightarrow$ (i)'':\quad Conversely, for $x\in \mathring{M}$
and $f\in C_{N}^{\infty}(M)$ such that $\Hess_f(x)=0$, we have
\begin{align}\label{eq1-1}
  & \frac{|\nabla P_tf|^2-P_t|\nabla f|^2}{t}+\E\l[\frac{1-\e^{-2\varK_1(X_{[0,t]})}}{t}|\nabla f|^2(X_t)\r]\notag\\
  & \leq 4\bigg(\frac{\E[\e^{\frac{1}{2}\left(\varK_2(X_{[0,t]})-\varK_1(X_{[0,t]})\right)}-1]}{t}|\nabla f|^2
    +\frac{\big<\nabla f, \nabla P_tf-\partrinv0t\nabla f(X_t)\big>}{t}\notag\\
  &\qquad +\l<\nabla f, \E\l[\frac1t \left(1-\e^{-\varK_1(X_{[0,t]})}\right)\partrinv0t\nabla f(X_t)\r]\r>\bigg)\wedge 0.
\end{align}
By Lemma \ref{lem1}(i), there exists $r>0$ such that
$B(x,r)\subseteq \mathring{M}$ and
\begin{align*}
  \lim_{t\rightarrow 0}\frac{1-\e^{-2\varK_1(X_{[0,t]})}}{t}&=\lim_{t\rightarrow 0}\frac{1-\e^{-2\varK_1(X_{[0,t\wedge \sigma_r]})}+{\text{\rm o}(t)}}{t}=\lim_{t\rightarrow 0}\frac{2K_1(x)(t\wedge \sigma_r)+\text{\rm o}(t)}{t}=2K_1(x).
\end{align*}
Similarly, we have
\begin{align*}
  \lim_{t\rightarrow 0}\frac1t \E\left[\e^{\frac{1}{2}(\varK_2(X_{[0,t]})-\varK_1(X_{[0,t]}))}-1\right]=\frac{K_2(x)-K_1(x)}{2},
\end{align*}
and
\begin{align*}
  \lim_{t\rightarrow 0}\bigg<\nabla f, \E\bigg[\frac{(1-\e^{-\varK_1(X_{[0,t]})})}{t}\partrinv0t\nabla f(X_t)\bigg]\bigg>=K_1(x)|\nabla f|^2(x).
\end{align*}
Thus, letting $t\rightarrow 0$ on both sides of \eqref{eq1-1} and
using Lemma \ref{lemma6}, we obtain
$$-2\Ric^Z(\nabla f, \nabla f)+2K_1(x)|\nabla f|^2
\leq \l[2(K_2(x)-K_1(x))|\nabla f|^2-4\Ric^Z(\nabla f, \nabla
f)+4K_1(x)|\nabla f|^2\r]\wedge 0,$$ i.e.,
$$K_1(x)|\nabla f|^2\leq \Ric^Z(\nabla f, \nabla f)\leq K_2(x)|\nabla f|^2.$$
We choose $x\in \partial M$ and $f\in C_{N}^{\infty}(M)$. We can
rewrite the inequality in item (ii) as
\begin{align*}
  & \frac{\sqrt{\pi}\,(|\nabla P_tf|^2-P_t|\nabla f|^2)}{2\sqrt{t}}
    +\E\bigg[\frac{\sqrt{\pi}\,\big(1-\e^{-2\varK_1(X_{[0,t]})}\big)}{2\sqrt{t}}\,|\nabla f|^2(X_t)\bigg]\\
  & \quad\leq 4\bigg[\frac{\sqrt{\pi}\,\E\big[\e^{\frac{1}{2}\left(\varK_2(X_{[0,t]})-\varK_1(X_{[0,t]})\right)}-1\big]}{2\sqrt{t}}|\nabla f|^2+\frac{\sqrt{\pi}\,\big<\nabla f, \nabla P_tf-\partrinv0t\nabla f(X_t)\big>}{2\sqrt{t}}\\
  &\quad\qquad +\bigg<\nabla f, \E\bigg[\frac{\sqrt{\pi}\,(1-\e^{-\varK_1(X_{[0,t]})})}{2\sqrt{t}}\partrinv0t\nabla f(X_t)\bigg]\bigg>\bigg]\wedge 0.
\end{align*}
Now letting $t\rightarrow 0$, by Lemma \ref{II-form} and Lemma
\ref{lem5}, we obtain
\begin{align*}
  -2\II&(\nabla f, \nabla f)+2\sigma_1(x)|\nabla f|^2\\
       &\quad\leq [-4\II(\nabla f, \nabla f)+2(\sigma_2(x)-\sigma_1(x))|\nabla f|^2+4\sigma_1(x)|\nabla f|^2]\wedge 0,
\end{align*}
i.e.,
$$\sigma_1(x)|\nabla f|^2(x)\leq \II(\nabla f, \nabla f)(x)\leq
\sigma_2(x)|\nabla f|^2(x).$$
Similarly, using Lemma \ref{lemma6} and \ref{II-form}, one can prove
``$\text{(iii)}\Rightarrow\text{(i)}$''; we skip the details here.
\end{proof}

\section{Extension to evolving manifolds}\label{Sect:4}
In this section, we deal with the case that the underlying manifold
carries a geometric flow of complete Riemannian metrics. More
precisely, for some $T_c\in (0,\infty]$, we consider the situation of
a $d$-dimensional differentiable manifold $M$ equipped with a $C^1$
family of complete Riemannian metrics $(g_t)_{t\in [0,T_c)}$.  Let
$\nabla^t$ be the Levi-Civita connection and $\Delta_t$ the
Laplace-Beltrami operator associated with the metric $g_t$.  In
addition, let $(Z_t)_{t\in [0,T_c)}$ be a $C^{1}$-family of vector
fields on $M$.  For the sake of brevity, we write
\begin{align*}
  &\mathcal{R}_t^Z(X,Y):=\Ric_t(X,Y)-\l<\nabla^t_XZ_t, Y\r>_t-\frac{1}{2}\partial_tg_t(X,Y),\quad
    X, Y\in T_xM,\ x\in M,
\end{align*}
where $\Ric_t$ is the Ricci curvature tensor with respect to the
metric $g_t$ and $\l<\cdot,\cdot\r>_t:=g_t(\cdot,\cdot)$.

In what follows, for real-valued functions $\phi,\psi$ on
$[0,T_c)\times M$, we write $\psi\leq \mathcal{R}^Z\leq \phi$,
if $$\psi_t|X|^2_t\leq \mathcal{R}_t^Z(X,X)\leq \phi_t |X|^2_t$$ holds
for all $X\in TM$ and $t\in [0,T_c)$, where by definition $|X|_t:=\sqrt{g_t(X,X)}$.  Let
$X_t$ be the diffusion process generated by $L_t:=\Delta_t+Z_t$
(called $L_t$-diffusion) which is assumed to be non-explosive up to
time $T_c$.

We first introduce some notations and recall the construction of
$X_t$. Let $\FM$ be the frame bundle over $M$ and $\OtM$ the
orthonormal frame bundle over $M$ with respect to the metric $g_t$.
We denote by $\pi\colon\FM\rightarrow M$ the projection from
$\FM$ onto $M$.  For $u\in \FM$, let
$$T_{\pi u}M\to T_u\FM,\quad X\mapsto H^{t}_{X}(u),$$
be the $\nabla^{t} $-horizontal lift.  In particular, we consider the
standard-horizontal vector fields $H_{i}^t$ on $\FM$ given by
$$H_{i}^t(u)=H_{ue_i}^t(u), \quad i=1,2,\ldots, d$$
where $\{e_i\}_{i=1}^{d}$ denotes the canonical orthonormal basis
of~$\R^d$.  Let $\{V_{\alpha, \beta}\}_{\alpha,\beta=1}^d$ be the
standard-vertical vector fields on $\FM$,
$$V_{\alpha, \beta}(u):=T\ell_u(\exp(E_{\alpha,\beta})),\quad u\in \FM,$$
where $E_{\alpha,\beta}$ is a basis of the real $d\times d$ matrices,
and $\ell_u\colon\Gl(d;\R)\rightarrow \FM$, $g\mapsto u\cdot g$, is
defined via left multiplication of the general linear group
$\Gl(d;\R)$ on $\FM$.

Let $B_t=(B_t^1,\ldots,B_t^d)$ be a $\R^d$-valued Brownian motion on a
complete filtered probability space
$(\Omega,\{\mathscr{F}_t\}_{t\geq 0}, \P)$.  To construct the
$L_t$-diffusion $X_t$, we first construct the corresponding horizontal
diffusion process $u_t$
by solving the following Stratonovich SDE on $\FM$:
\begin{align}\label{SDE-u}
  \begin{cases}
    \vd u_t=\sqrt{2}\dsum_{i=1}^{d}H_{i}^t(u_t)\circ \vd
    B_t^{i}+H_{Z_t }^t(u_t)\vd t-\frac{1}{2}\dsum_{\alpha,
      \beta=1}^{d}\mathcal{G}_{\alpha,\beta}(t, u_t)V_{\alpha \beta}(u_t)\vd t,\medskip\\
    u_s\in \mathpal{O}_s(M),\ \pi(u_s)=x,\ s\in [0,T_c),
  \end{cases}
\end{align}
where
$\mathcal{G}_{\alpha,\beta}(t,u_t):=\partial_tg_t(u_te_{\alpha},
u_te_{\beta})$.
As explained in \cite{ACT}, the last term is crucial to ensure
$u_t\in\OtM$.  Since $\{H_{Z_{t} }^{t}\}_{t\in [0,T_c)}$ is
$C^{1,\infty}$-smooth, Eq.~\eqref{SDE-u} has a unique solution up to
its lifetime $\zeta:=\dlim_{n\rightarrow \infty}\zeta_n$ where
\begin{align}\label{zeta-n}
  \zeta_n:=\inf\left\{t\in [s,T_c):\rho_t(\pi(u_s), \pi(u_t))\geq n\right\}, \quad n\geq 1,\quad \inf\varnothing:=T_c,
\end{align}
and where $\rho_t$ stands for the Riemannian distance induced by the
metric $g_t$.  Then $X_t^{(s,x)}=\pi(u_t)$ solves the equation
$$\vd X_t^{(s,x)}=\sqrt{2} u_t\circ \vd B_t+Z_t(X_t^{(s,x)})\,\vd t,\quad X_s^{(s,x)}=x:=\pi(u_s)$$
up to the lifetime $\zeta$. By It\^o's formula, for any
$f\in C_0^2(M)$,
$$f(X_t^{(s,x)})-f(x)-\int_s^tL_rf(X^{(s,x)}_r)\vd r=\sqrt{2}\int_s^t\big<\partrinv sr\nabla^rf(X^{(s,x)}_r), u_s^x\vd B_r\bigr>_s,
\quad t\in [s,T_c),$$
is a martingale up to $\zeta$.  In other words, $X_t^{(s,x)}$ is a diffusion
process with generator $L_t$.  In case $s=0$, if there is no risk of
confusion, we write again $X_t^{x}$ instead of~$X_t^{(0,x)}$.

Throughout this section, we assume that the diffusion $X_t$ generated
by $L_t$ is non-explosive up to time $T_c$ (see \cite{KR} for
sufficient conditions ensuring non-explosion).  Then this process
gives rise to an inhomogeneous Markov semigroup
$\{P_{s,t}\}_{0\leq s\leq t< T_c}$ on $\mathcal{B}_b(M)$ by
$$P_{s,t}f(x):=\E\big[f(X_t^{(s,x)})\big]=\E^{(s,x)}\left[f(X_t)\right],\quad x\in M,\ f\in \mathcal{B}_b(M),$$
which is called the diffusion semigroup generated by $L_t$.

We are now in position to present the main result of this section.

\begin{theorem}\label{th3}
  Let $(t,x)\mapsto K_1(t,x)$ and $(t,x)\mapsto K_2(t,x)$ be two
  functions on $M$ such that $K_1\leq K_2$. Suppose that
  \begin{align}\label{eq13}
    \E\left[\e^{-(2+\varepsilon)\int_0^tK_1(s,X_s)\,\vd s}\right]<\infty \quad\text{for some } \varepsilon>0\ \text{ and }\  t\in (0,T_c).
  \end{align}
  The following statements are equivalent to each other:
  \begin{enumerate}
  \item [\rm (i)] the curvature ${\mathcal R}_t^Z$ for time-dependent
    Witten Laplacian satisfies
 $$K_1(t, x)\leq {\mathcal R}_t^Z(x)\leq K_2(t, x), \quad (t,x)\in [0,T_c)\times M;$$
\item [\rm (ii)] for $f\in C_0^{\infty}(M)$ and $0\leq s\leq t<T_c$,
  \begin{align*}
    & |\nabla^sP_{s,t}f|_s^2-\E^{(s,x)}\l[\e^{-2\int_s^t K_1(r,X_r)\,\vd r}\,|\nabla^t f|_t^2(X_t)\r]\\
    & \quad\leq 4\bigg[\left(\E^{(s,x)}\e^{\frac{1}{2}\int_s^t(K_2(r,X_r)-K_1(r,X_r))\,\vd r}-1\right)|\nabla^s f|_s^2+\l<\nabla^s f, \nabla^s P_{s,t}f\r>_s\\
    &\qquad-\l<\nabla^s f,\E^{(s,x)}\l[\e^{-\int_s^t K_1(r,X_r)\,\vd r} \partrinv st\nabla^t f(X_t)\r]\r>_s\bigg]\wedge 0;
  \end{align*}
\item [\rm(ii')] for $f\in C_0^{\infty}(M)$ and $0\leq s\leq t<T_c$,
  \begin{align*}
    &|\nabla^sP_{s,t}f|_s^2-\E^{(s,x)}\l[\e^{-2\int_s^t K_1(r,X_r)\,\vd r}\,|\nabla^t f|_t^2(X_t)\r]\\
    &\quad\leq 4\bigg[\E^{(s,x)}\e^{\frac{1}{2}\int_s^t(K_2(r,X_r)-K_1(r,X_r))\,\vd r}|\nabla^s P_{s,t}f|_s^2\\
    &\qquad-\l<\nabla^s P_{s,t}f, \E^{(s,x)}\l[\e^{-\int_s^t K_1(r,X_r)\,\vd r}\partrinv st\nabla ^t f(X_t)\r]\r>_s\bigg]\wedge 0;
  \end{align*}
\item [\rm(iii)] for $f\in C_0^{\infty}(M)$, $p\in (1,2]$ and
  $0\leq s\leq t<T_c$,
  \begin{align*}
    & \frac{p(P_{s,t}f^2-(P_{s,t}f^{2/p})^p)}{4(p-1)}-\E^{(s,x)}\l[\int_s^t\e^{-2\int_r^t K_1(\tau, X_{\tau})\,\vd \tau}\vd r\times |\nabla^t f|_t^2(X_t)\r]\\
    &\quad\leq 4\int_s^t\l[\E^{(s,x)}\e^{\frac{1}{2}\int_r^t(K_2(\tau, X_{\tau})-K_1(\tau, X_{\tau}))\vd \tau}-1\r]P_{s,r}|\nabla^r f|_r^2\\
    &\qquad +\E^{(s,x)}\l<\nabla^r f(X_r),\nabla^r P_{r,t}f(X_r)-\e^{-\int_r^t K_1(\tau, X_{\tau})\vd \tau}\partrinv rt\nabla^t f(X_t)\r>_r\,\vd r\wedge 0;
  \end{align*}
\item [\rm(iii')] for $f\in C_0^{\infty}(M)$, $p\in (1,2]$ and
  $0\leq s\leq t<T_c$,
  \begin{align*}
    & \frac{p(P_{s,t}f^2-(P_{s,t}f^{2/p})^p)}{4(p-1)}-\E^{(s,x)}\l[\int_s^t\e^{-2\int_r^t K_1(\tau, X_{\tau})\,\vd \tau}\vd r \times |\nabla^t f|_t^2(X_t) \r]\\
    &\quad\leq 4\int_s^t\E^{(s,x)}\e^{\frac{1}{2}\int_r^t(K_2(\tau, X_{\tau})-K_1(\tau, X_{\tau}))\vd \tau}P_{s,r}|\nabla^r P_{r,t}f|_r^2\\
    &\qquad -\E^{(s,x)}\l[\e^{-\int_r^tK_1(\tau, X_{\tau})\vd \tau}\l<\nabla^r P_{r,t}f(X_r),\partrinv rt\nabla^t f(X_t)\r>_r\r]\,\vd r\wedge 0;
  \end{align*}
\item [\rm(iv)] for $f\in C_0^{\infty}(M)$ and $0\leq s\leq t<T_c$,
  \begin{align*}
    & \frac{1}{4}\left(P_{s,t}(f^2\log f^2)-P_{s,t}f^2\log P_{s,t}f^2\right)-\E^{(s,x)}\l[\int_s^t\e^{-2\int_r^t K_1(\tau, X_{\tau})\,\vd \tau}\,\vd r \times |\nabla^t f|_t^2(X_t) \r]\\
    &\quad\leq 4\int_s^t\l[\E^{(s,x)}\e^{\frac{1}{2}\int_r^t(K_2(\tau, X_{\tau})-K_1(\tau, X_{\tau}))\vd \tau}-1\r]P_{s,r}|\nabla^r f|_r^2\\
    &\qquad +\E^{(s,x)}\l<\nabla^r f(X_r),\nabla^r P_{r,t}f(X_r)-\e^{-\int_r^tK_1(\tau, X_{\tau})\vd \tau}\partrinv rt\nabla^t f(X_t)\r>_r\,\vd r\wedge 0;
  \end{align*}
\item [\rm(iv')] for $f\in C_0^{\infty}(M)$ and $0\leq s\leq t<T_c$,
  \begin{align*}
    & \frac{1}{4}(P_{s,t}(f^2\log f^2)-P_{s,t}f^2\log P_{s,t}f^2)-\E^{(s,x)}\l[\int_s^t\e^{-2\int_r^t K_1(\tau, X_{\tau})\,\vd \tau}\,\vd r\times|\nabla^t f|_t^2(X_t) \r]\\
    &\quad\leq 4\int_s^t\E^{(s,x)}\e^{\frac{1}{2}\int_r^t(K_2(\tau, X_{\tau})-K_1(\tau, X_{\tau}))\vd \tau}P_{s,r}|\nabla^{r} P_{r,t}f|_r^2\\
    &\quad \quad -\E^{(s,x)}\l[\e^{-\int_r^tK_1(\tau, X_{\tau})\vd \tau}\l<\nabla^r P_{r,t}f(X_r), \partrinv rt\nabla^t f(X_t)\r>_r\r]\,\vd r\wedge 0.
  \end{align*}
\end{enumerate}
\end{theorem}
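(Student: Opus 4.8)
The plan is to follow the architecture of the proofs of Theorems \ref{th1} and \ref{th2}, replacing the homogeneous semigroup $P_t$ by the inhomogeneous $P_{s,t}$, the parallel transport along the $L$-diffusion by the metric-adapted parallel transport $\partr st\colon(T_{X_s}M,g_s)\to(T_{X_t}M,g_t)$ induced by the horizontal process $u_t$ of \eqref{SDE-u}, and the additive functionals $\varK_i$ by $\int_s^tK_i(r,X_r)\,\vd r$. Two analytic inputs are needed. First, a Bismut-type derivative formula: for fixed $s$ there is a unique pathwise-continuous $\Aut(T_{X_s}M)$-valued process $Q_{s,t}$ solving
\begin{equation*}
\frac{\vd}{\vd t}Q_{s,t}=-Q_{s,t}\,(\mathcal R^Z_t)_{\partr st},\quad Q_{s,s}=\id_{T_{X_s}M},\qquad (\mathcal R^Z_t)_{\partr st}:=\partrinv st\circ(\mathcal R^Z_t)_{X_t}\circ\partr st,
\end{equation*}
such that $\nabla^rP_{r,t}f(X_r)=\partr sr\,\E^{(s,x)}[\partrinv sr Q_{r,t}\partrinv rt\nabla^tf(X_t)\mid\mathscr F_r]$ for $s\le r\le t<T_c$; this is the evolving analogue of \eqref{Eq:GradEst}, obtained from the construction in \cite{ACT}. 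Second, the evolving analogue of Lemma \ref{lem1}: for $f\in C_0^\infty(M)$ with $\nabla^sf(x)=X$, $|X|_s=1$, $\Hess^s_f(x)=0$ and $f_n=f+n$, one has, as $t\downarrow s$, the expansions $P_{s,t}|\nabla^tf|^p(x)-|\nabla^sP_{s,t}f|^p(x)=p\,\mathcal R^Z_s(X,X)(t-s)+\text{\rm o}(t-s)$ and $\langle\nabla^sf,\E^{(s,x)}[\partrinv st\nabla^tf(X_t)]\rangle_s-\langle\nabla^sf,\nabla^sP_{s,t}f\rangle_s=\mathcal R^Z_s(X,X)(t-s)+\text{\rm o}(t-s)$, together with the same identity with $\nabla^sP_{s,t}f$ in place of the first $\nabla^sf$ and the Poincar\'e and log-Sobolev counterparts for $f_n$; these come from It\^o's formula for $\partrinv st\nabla^tf(X_t)$ along \eqref{SDE-u} combined with the Bochner--Weitzenb\"ock identity for $\nabla^t$ at a point where $\Hess^s_f$ vanishes, the hypothesis \eqref{eq13} being used to control expectations.

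For the implication (i)$\Rightarrow$(ii)--(iv),(ii')--(iv') I would transcribe Part I of the proof of Theorem \ref{th1} essentially verbatim. From $\mathcal R^Z\ge K_1$ one gets $\|Q_{s,\tau}\|\le\e^{-\int_s^\tau K_1(r,X_r)\,\vd r}$, hence the gradient estimate $|\nabla^sP_{s,t}f|_s^2\le\E^{(s,x)}[\e^{-2\int_s^tK_1(r,X_r)\,\vd r}|\nabla^tf|_t^2(X_t)]$ and, by the same reasoning as in the static case, the accompanying Poincar\'e and log-Sobolev inequalities. Writing $2\nabla^sf-Q_{s,t}\partrinv st\nabla^tf(X_t)$ with the insertion of $\e^{-\frac12\int_s^t(K_1+K_2)(r,X_r)\,\vd r}$ and using
\begin{equation*}
\id-\e^{\frac12\int_s^t(K_1+K_2)(r,X_r)\,\vd r}Q_{s,t}=\int_s^t\e^{\frac12\int_s^\tau(K_1+K_2)(r,X_r)\,\vd r}Q_{s,\tau}\Bigl((\mathcal R^Z_\tau)_{\partr s\tau}-\frac{K_1(\tau,X_\tau)+K_2(\tau,X_\tau)}{2}\id\Bigr)\vd\tau
\end{equation*}
together with the pinching bound $|(\mathcal R^Z_\tau)_{\partr s\tau}-\frac12(K_1+K_2)(\tau,X_\tau)\id|\le\frac12(K_2-K_1)(\tau,X_\tau)$ gives $\|\id-\e^{\frac12\int_s^t(K_1+K_2)}Q_{s,t}\|\le\e^{\frac12\int_s^t(K_2-K_1)(r,X_r)\,\vd r}-1$. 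Cauchy's inequality, expanding the square, and taking expectation then yield (ii) (coefficient $a=1$) and (ii') ($a=0$). For (iii),(iii'),(iv),(iv') I would apply It\^o's formula to $(P_{r,t}f^{2/p})^p(X_r)$, resp. to $(P_{r,t}f^2\log P_{r,t}f^2)(X_r)$, whose drift, by Jensen's inequality and the conditional derivative formula, is bounded by a constant multiple of $\E^{(s,x)}[|Q_{r,t}\partrinv rt\nabla^tf(X_t)|_r^2\mid\mathscr F_r]$; inserting the conditional analogue of \eqref{Eq:Estimate} and integrating from $s$ to $t$ reproduces steps (b) and (c) of Theorem \ref{th1}.

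For the converse (ii)--(iv),(ii')--(iv')$\Rightarrow$(i) I would fix $(s,x)$, choose $f\in C_0^\infty(M)$ with $\nabla^sf(x)=X$, $\Hess^s_f(x)=0$, set $f_n=f+n$, divide each inequality by the appropriate power of $(t-s)$, pass to the $f_n$-version and the double limit $\lim_{n\to\infty}\lim_{t\downarrow s}$ where needed, and substitute the short-time expansions above. As in Part II of Theorem \ref{th1}, the two sides then combine to
\begin{equation*}
-\mathcal R^Z_s(X,X)+K_1(s,x)\le\bigl(-2\mathcal R^Z_s(X,X)+(K_1(s,x)+K_2(s,x))\bigr)\wedge 0,
\end{equation*}
which forces $K_1(s,x)\le\mathcal R^Z_s(X,X)\le K_2(s,x)$; since $s\in[0,T_c)$, $x\in M$ and $X\in T_xM$ are arbitrary, this is (i). The ``$\wedge 0$'' on the right is exactly what produces the upper bound, just as in the static case.

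The step I expect to be the main obstacle is not the algebra, which is a line-by-line copy of Theorems \ref{th1}--\ref{th2}, but securing the two analytic inputs in the time-dependent geometry: confirming that the derivative formula holds with $\mathcal R^Z_t$ --- and not $\Ric^Z_t$ --- as the tensor driving $Q_{s,t}$, and establishing the short-time asymptotics. Both hinge on the same point: when It\^o's formula is applied to $\partrinv st\nabla^tf(X_t)$ along the horizontal SDE \eqref{SDE-u}, the vertical correction term $-\frac12\sum_{\alpha,\beta}\mathcal G_{\alpha,\beta}(t,u_t)V_{\alpha\beta}(u_t)$, present precisely to keep $u_t$ $g_t$-orthonormal, contributes in the end exactly the $-\frac12\partial_tg_t$ appearing in the definition of $\mathcal R^Z_t$; one also needs a uniform estimate on $\partrinv st\nabla^tf$ near the diagonal, obtained by localizing with exit times as in Lemma \ref{lemma6} and invoking \eqref{eq13}, in order to justify interchanging limits with expectations.
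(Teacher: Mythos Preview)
Your proposal is correct and follows essentially the same route as the paper: the two analytic inputs you single out are exactly the paper's Lemmas~\ref{lem3} and~\ref{lem2} (the derivative formula with $Q_{s,t}$ driven by $\mathcal R^Z_t$, cited from \cite{Cheng15}, and the short-time expansions for $\mathcal R^Z_s$), and the algebraic manipulation---the estimate $\|\id-\e^{\frac12\int_s^t(K_1+K_2)}Q_{s,t}\|\le\e^{\frac12\int_s^t(K_2-K_1)}-1$, the Cauchy step, the It\^o expansion of $(P_{r,t}f^{2/p})^p(X_r)$, and the $t\downarrow s$ limit for the converse---matches the paper's proof line by line. Your diagnosis of the ``main obstacle'' (that the vertical drift in \eqref{SDE-u} is precisely what produces the $-\frac12\partial_tg_t$ in $\mathcal R^Z_t$) is also the right one, and is what underlies Lemma~\ref{lem3}.
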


\begin{remark}\label{rem5}
  By \cite{Cheng15}, the integral condition \eqref{eq13} can be
  satisfied if $K_2(t,\cdot)/\rho_t^2\rightarrow 0$ as
  $\rho_t^2\rightarrow \infty$ and one of the following conditions
  is satisfied.
  \begin{itemize}
  \item [(A1)] there exists a non-negative continuous function $C$ on
    $[0,T_c)$ such that for all $t\in [0,T_c)$,
$$\mathcal{R}^Z_t\geq -C(t);$$
\item [(A2)] there exist two non-negative continuous functions
  $C_1, C_2$ on $[0,T_c)$ such that for all $t\in [0,T_c)$,
 $$\Ric_t\geq -C_1(t)(1+\rho_t^2)\ \
 {\rm and} \quad \partial_t\rho_t+ \l<Z_t ,\nabla^{t}\rho_t\r>_t\leq
 C_2(t)(1+\rho_t).$$
\end{itemize}
\end{remark}

To prove the theorem, we need the following lemmas: the 
derivative formula and characterization formulae for
${\mathcal R}_t^Z$.  For $s\leq t$, let
${\mathcal R}_{\partr st}^Z:=\partrinv st\circ {\mathcal
  R}_t^Z(X_t)\circ \partr st$.

\begin{lemma}\label{lem3}
  (\cite[Theorem 3.1]{Cheng15}) Let $\mathcal{R}_t^Z(x)\geq K(t,x)$
  for all $t\in [0,T_c)$ and suppose that
$$\sup_{r\in [0,t]}\E\left[\e^{-\int_0^rK(s,X_s)\vd s}\right]<\infty$$
for $t\in [0,T_c)$. Then, for $0\leq s\leq t$,
$$\nabla^s P_{s,t}f(x)=\E^{(s,x)}\left[Q_{s,t}\partrinv st\nabla^t f(X_t^{(s,x)})\right],$$
where for fixed $s\geq0$, the random family $Q_{s,t}\in\Aut(T_{X_s}M)$
is constructed for $t\geq s$ as solution to the equation:
\begin{align}\label{eq-Q3}
  \frac{\vd Q_{s,t}}{\vd t}=-Q_{s,t}\,{\mathcal R}_{\partr st}^Z,\quad Q_{s,s}=\id.
\end{align}
\end{lemma}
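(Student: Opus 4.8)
The formula is the Bismut--Elworthy--Li derivative formula in the setting of the flow, and the natural plan is to exhibit an appropriate martingale. Fix $t\in(0,T_c)$ and $x\in M$, write $X_r:=X_r^{(s,x)}$, and consider the $T_{X_s}M$-valued process
$$N_r:=Q_{s,r}\,\partrinv sr\,\nabla^r P_{r,t}f(X_r),\qquad r\in[s,t].$$
Since $Q_{s,s}=\id$ and $\partrinv ss=\id$, the endpoint $N_s=\nabla^sP_{s,t}f(x)$ is deterministic; since $P_{t,t}f=f$, the endpoint $N_t=Q_{s,t}\partrinv st\nabla^tf(X_t)$ is exactly the random variable appearing in the claim. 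Thus it suffices to prove that $(N_r)_{r\in[s,t]}$ is a genuine $\{\mathscr{F}_r\}$-martingale and then take expectations. A preliminary remark, needed to make It\^o's formula legitimate below, is that $(r,x)\mapsto P_{r,t}f(x)$ is $C^{1,2}$; this is standard parabolic regularity for the time-dependent operator $L_r$.

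The martingale property comes down to one It\^o computation for the gradient field $\phi_r:=\nabla^rP_{r,t}f$. Differentiating the backward equation $\partial_rP_{r,t}f=-L_rP_{r,t}f$ (with terminal value $f$) in $r$ and commuting $\nabla^r$ past $L_r=\Delta_r+Z_r$ by the Bochner--Weitzenb\"ock identity for $g_r$ produces the curvature term $\Ric_r$, the term $-\langle\nabla^r_\cdot Z_r,\cdot\rangle_r$ from the derivative of $Z_r$, and a term in $\partial_rg_r$ from the time-dependent index-raising that defines the gradient; on the other hand, It\^o's formula for the $T_{X_s}M$-valued process $r\mapsto\partrinv sr\phi_r(X_r)$ carries a further $\partial_rg_r$-contribution, since the correction term $-\frac12\sum_{\alpha,\beta}\mathcal{G}_{\alpha,\beta}V_{\alpha\beta}$ in \eqref{SDE-u} forces $u_r\in\mathpal{O}_r(M)$ and hence $\partrinv sr$ is not the Levi-Civita parallel transport of any single metric. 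The bookkeeping carried out in \cite{ACT} shows that these contributions assemble into precisely $\mathcal{R}^Z_r=\Ric_r-\langle\nabla^r_\cdot Z_r,\cdot\rangle_r-\frac12\partial_rg_r$, so that
$$\vd\big[\partrinv sr\phi_r(X_r)\big]=\vd M_r+\mathcal{R}^Z_{\partr sr}\,\partrinv sr\phi_r(X_r)\,\vd r$$
for some local martingale $M_r$. Multiplying by $Q_{s,r}$ and invoking the pathwise linear ODE \eqref{eq-Q3}, the finite-variation terms cancel by the product rule, so $\vd N_r=Q_{s,r}\,\vd M_r$ and $(N_r)_{r\in[s,t]}$ is a local martingale.

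The remaining point --- and the only place where the integrability hypothesis enters --- is to upgrade $N_r$ from a local to a true martingale. Applied through its adjoint, which keeps the curvature endomorphism on the left, the ODE \eqref{eq-Q3} gives $\frac{\vd}{\vd r}|Q_{s,r}^*\xi|^2=-2\langle\mathcal{R}^Z_{\partr sr}Q_{s,r}^*\xi,Q_{s,r}^*\xi\rangle\leq-2K(r,X_r)|Q_{s,r}^*\xi|^2$ from the lower bound $\mathcal{R}^Z_r\geq K(r,\cdot)$, whence by Gronwall $\|Q_{s,r}\|=\|Q_{s,r}^*\|\leq\exp\!\big(-\int_s^rK(\tau,X_\tau)\,\vd\tau\big)$; combined with $\sup_{r\in[0,t]}\E[\e^{-\int_0^rK(\tau,X_\tau)\vd\tau}]<\infty$ this bounds $\E\|Q_{s,r}\|$ on $[s,t]$. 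One then localizes along the exit times $\zeta_n$ of \eqref{zeta-n}: on $[s,t\wedge\zeta_n]$ all factors are bounded, so $N_{r\wedge\zeta_n}$ is a true martingale and $\E[N_{t\wedge\zeta_n}]=N_s$; letting $n\to\infty$, using non-explosion of $X$ up to $T_c$, the integrability just used, and standard a priori gradient bounds for $P_{r,t}f$ on compact time-space domains, one passes to the limit to obtain $\nabla^sP_{s,t}f(x)=\E^{(s,x)}[Q_{s,t}\partrinv st\nabla^tf(X_t)]$. This last limit, against a possibly unbounded and sign-changing curvature controlled only through the exponential-integrability assumption, is the step I expect to be the main obstacle. (The statement is \cite[Theorem 3.1]{Cheng15}; the foregoing only indicates the route.)
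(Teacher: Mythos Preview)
The paper does not prove this lemma at all: it is quoted verbatim as \cite[Theorem~3.1]{Cheng15} and used as a black box, so there is no ``paper's own proof'' to compare against. Your sketch is the standard martingale argument behind such Bismut-type formulae and is essentially the route taken in the cited reference: one shows that $r\mapsto Q_{s,r}\partrinv sr\nabla^rP_{r,t}f(X_r)$ is a local martingale via the commutation identity (Bochner--Weitzenb\"ock plus the $\partial_rg_r$ bookkeeping from \cite{ACT}), then uses the curvature lower bound through the Gronwall estimate $\|Q_{s,r}\|\leq\exp\big(-\int_s^rK(\tau,X_\tau)\,\vd\tau\big)$ together with the integrability hypothesis to pass from local to true martingale. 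Your outline is sound; the only point worth tightening is the final dominated-convergence step, where one needs a uniform-in-$n$ integrable bound for $N_{t\wedge\zeta_n}$, and here the single assumption $\sup_{r\in[0,t]}\E[\e^{-\int_0^rK}]<\infty$ together with $\|\nabla f\|_\infty<\infty$ does the job directly, without needing the ``a~priori gradient bounds for $P_{r,t}f$'' you invoke.
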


\begin{lemma}\label{lem2}
  For $s\in [0,T_c)$ and $x\in M$, let $X\in T_x M$ with
  $|X|_s=1$. Furthermore, let $f\in C_0^{\infty}(M)$ be such that
  $\nabla^{s} f(x)=X$ and $\Hess^s_f(x)=0$, and set $f_n=n+f$ for
  $n\geq 1$. Then,
  \begin{enumerate}[\rm(i)]
  \item for any $p>0$,
    \begin{align*}
      \mathcal{R}^Z_s(X,X)
      &=\lim_{t\downarrow
        s}\frac{P_{s,t}|\nabla^tf|^p_t(x)-|\nabla^{s}P_{s,t}f|^p_s(x)}{p(t-s)};
    \end{align*}
  \item for any $p>1$,
    \begin{align}
      \mathcal{R}^Z_s(X,X)
      &=\lim_{n\rightarrow\infty}\lim_{t\downarrow s
        }\frac{1}{t-s}\l(\frac{p\big(P_{s,t}f_n^2-(P_{s,t}f_n^{2/{p}})^p\big)}{4(p-1)(t-s)}-|\nabla^{s}P_{s,t}f_n|_s^2\r)(x)\nonumber\\
      &=\lim_{n\rightarrow\infty}\lim_{t\downarrow s}\frac{1}{t-s}\l(P_{s,t}|\nabla^tf|^2_t-\frac{p\big(P_{s,t}f_n^2-(P_{s,t}f_n^{2/p})^p\big)}{4(p-1)(t-s)}\r)(x);\label{2-2}
    \end{align}
  \item $\mathcal{R}^Z_s(X,X)$ is equal to each of the following
    limits:
    \begin{align*}
      \quad\mathcal{R}^Z_s(X,X)&=\lim_{n\rightarrow\infty}\lim_{t\downarrow
                                 s}\frac{1}{(t-s)^2}\l\{(P_{s,t}f_n)\big[P_{s,t}(f_n\log
                                 f_n)-(P_{s,t}f_n)\log
                                 P_{s,t}f_n\big]-(t-s)|\nabla^{s}P_{s,t}f|_s^2\r\}(x)\\ 
                               &=\lim_{n\rightarrow\infty}\lim_{t\downarrow
                                 s}\frac{1}{4(t-s)^2}\l\{4(t-s)P_{s,t}|\nabla^tf|^2_t+(P_{s,t}f_n^2)\log
                                 P_{s,t}f_n^2-P_{s,t}{f_n^2\log f_n^2}\r\}(x);
    \end{align*}
  \item $\mathcal{R}^Z_s(X,X)$ can also be calculated via the
    following limits:
    \begin{align*}
      {\mathcal R}^Z_s(X,X)=&\lim_{t\downarrow s}\frac{\l\{\l<\nabla^s f, \E^{(s,x)}\partrinv st\nabla^t f(X_t)\r>_s-\l<\nabla^s f, \nabla^s P_{s,t} f\r>_s\r\}(x)}{t-s}\\
      =&\lim_{t\downarrow s}\frac{\l\{\l<\nabla^s P_{s,t}f,
         \E^{(s,x)}\partrinv st\nabla^t f(X_t)\r>_s-|\nabla^s
         P_{s,t}f|^2_s\r\}(x)}{t-s}.
    \end{align*}
  \end{enumerate}
\end{lemma}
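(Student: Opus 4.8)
The plan is to prove Lemma~\ref{lem2} in parallel with Lemma~\ref{lem1}, with Bismut's formula replaced by the derivative formula of Lemma~\ref{lem3} and the classical Bochner identity replaced by its analogue on an evolving manifold. Throughout, fix $s\in[0,T_c)$ and $x\in M$, put $X_s=x$, and evaluate every quantity at $x$. All short-time expansions below are understood after localizing at the exit time from a small geodesic ball $B(x,r)$: on $B(x,r)$ the continuous tensor $\mathcal{R}^Z$ is bounded, so the derivative formula of Lemma~\ref{lem3} applies to the stopped semigroup, and the contribution of the event that the diffusion leaves $B(x,r)$ before time $t$ is $\mathrm{o}(t-s)$ by a standard exit-time estimate. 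This is the boundaryless counterpart of the localization used in the proof of Lemma~\ref{lemma6}; no local-time term arises here since there is no boundary.

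I would first prove~(iv), which is the most direct. By the derivative formula, $\nabla^sP_{s,t}f(x)=\E^{(s,x)}[Q_{s,t}\partrinv st\nabla^tf(X_t)]$, so
$$\langle\nabla^sf,\E^{(s,x)}\partrinv st\nabla^tf(X_t)\rangle_s-\langle\nabla^sf,\nabla^sP_{s,t}f\rangle_s=\langle\nabla^sf,\E^{(s,x)}[(\id-Q_{s,t})\partrinv st\nabla^tf(X_t)]\rangle_s.$$
From~\eqref{eq-Q3} one has $\id-Q_{s,t}=\int_s^tQ_{s,\tau}\,\mathcal{R}^Z_{\partr s\tau}\,\vd\tau$; since $Q_{s,s}=\id$ and $\tau\mapsto\mathcal{R}^Z_{\partr s\tau}$ is continuous at $\tau=s$ with value $\mathcal{R}^Z_s(x)$, this yields $\E^{(s,x)}[\id-Q_{s,t}]=(t-s)\,\mathcal{R}^Z_s(x)+\mathrm{o}(t-s)$. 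Combining this with $\partrinv st\nabla^tf(X_t)\to\nabla^sf(x)$ in $L^2$ as $t\downarrow s$ and estimating the cross term by Cauchy--Schwarz gives $\E^{(s,x)}[(\id-Q_{s,t})\partrinv st\nabla^tf(X_t)]=(t-s)\,\mathcal{R}^Z_s\nabla^sf(x)+\mathrm{o}(t-s)$, and dividing by $t-s$ proves the first limit. The second limit follows in exactly the same way, using $\nabla^sP_{s,t}f(x)=\nabla^sf(x)+O(t-s)$ so that replacing $\nabla^sf$ by $\nabla^sP_{s,t}f$ on the left perturbs the expression only by $\mathrm{o}(t-s)$.

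For~(i)--(iii) I would use short-time semigroup asymptotics. For a $C^1$-family $(h_\tau)$ of smooth functions, $P_{s,t}h_t(x)=h_s(x)+(t-s)\big[(\partial_\tau h_\tau)|_{\tau=s}+L_sh_s\big](x)+\mathrm{o}(t-s)$; applying this with $h_\tau=|\nabla^\tau f|^p_\tau$ and using the pointwise identity $\partial_\tau|\nabla^\tau f|^2_\tau=-\partial_\tau g_\tau(\nabla^\tau f,\nabla^\tau f)$ (differentiate $|\nabla^\tau f|^2_\tau=(g_\tau^{-1}\vd f)(\vd f)$ in $\tau$) expands $P_{s,t}|\nabla^tf|^p_t$, while $|\nabla^sP_{s,t}f|^p_s$ is expanded from $P_{s,t}f=f+(t-s)L_sf+\mathrm{o}(t-s)$. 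At the point $x$, where $\Hess^s_f(x)=0$ and $|\nabla^sf|_s(x)=1$, the evolving Bochner identity
$$\mathcal{R}^Z_s(\nabla^sf,\nabla^sf)(x)=\tfrac12\Big[(\partial_\tau|\nabla^\tau f|^2_\tau)|_{\tau=s}+L_s|\nabla^sf|^2_s-2\langle\nabla^sf,\nabla^sL_sf\rangle_s\Big](x)$$
holds; here the term $\partial_\tau g_\tau$ is precisely what the summand $-\tfrac12\partial_tg_t$ in the definition of $\mathcal{R}^Z_t$ accounts for. Substituting it reduces each of~(i)--(iii), by the same computations as in the static case \cite[Theorem~2.2.4]{Wbook2} adapted to the evolving metric, to an identity, the term $\tfrac{p(P_{s,t}f_n^2-(P_{s,t}f_n^{2/p})^p)}{4(p-1)}$ in~(ii) and its log-analogue in~(iii) being treated by first replacing $f$ by $f_n=n+f$ and letting $n\to\infty$ as there. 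Alternatively, (i)--(iii) can be quoted directly from \cite{Cheng15}.

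The step I expect to be the main obstacle is the uniform control of the $\mathrm{o}(t-s)$-remainders: since no curvature bound is assumed in Lemma~\ref{lem2}, one cannot differentiate under the expectation freely, and every remainder --- in the semigroup asymptotics, in the expansion of $\E^{(s,x)}[\id-Q_{s,t}]$, and in the convergence $\partrinv st\nabla^tf(X_t)\to\nabla^sf(x)$ --- must be justified by stopping at the exit time from $B(x,r)$, bounding the stopped remainder, and then sending the exit-time probability to zero as $t\downarrow s$. A secondary technical point is the correct bookkeeping of $\partial_tg_t$, both in the evolving Bochner identity above and in the It\^o formula for $\partrinv st\nabla^tf(X_t)$ that underlies the expansion of $\E^{(s,x)}\partrinv st\nabla^tf(X_t)$; for this I would rely on \cite{ACT,Cheng15}.
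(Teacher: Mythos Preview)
Your proposal is correct and follows essentially the same route as the paper: for~(iv) both arguments use the derivative formula $\nabla^sP_{s,t}f=\E[Q_{s,t}\partrinv st\nabla^tf(X_t)]$ together with the integral representation $\id-Q_{s,t}=\int_s^tQ_{s,\tau}\,\mathcal{R}^Z_{\partr s\tau}\,\vd\tau$ to extract $\mathcal{R}^Z_s(\nabla^sf,\nabla^sf)$ in the limit, while for~(i)--(iii) the paper simply refers to \cite{Cheng15}, which you also list as an alternative. Your additional care about localization at the exit time from a small ball is welcome rigor that the paper leaves implicit.
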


\begin{proof}
  Without loss of generality, we prove (iv) only for $s=0$. For the
  remaining formulae, the reader is refered to \cite{Cheng15}. We have
  \begin{align*}
    &\lim_{t\downarrow 0}\frac{\big<\nabla^0 f,\E \partrinv0t\nabla^t f(X_t)\big>_0-\l<\nabla^0 f, \E Q_t\partrinv0t\nabla^t f(X_t)\r>_0}{t}\\
    &\quad= \lim_{t\downarrow 0}\l<\nabla^0 f,\E \l[\frac{(\id-Q_t)}{t}\partrinv0t\nabla^t f(X_t)\r]\r>_0\\
    &\quad= \lim_{t\downarrow 0}\l<\nabla^0 f,\E \l[\frac1t{\int_0^tQ_s{\mathcal R}^Z_{\partrinv0s}\,\vd s}\,\partrinv0t\nabla^t f(X_t)\r]\r>_0\\
    &\quad= \mathcal{R}_0^{Z}(\nabla^0 f, \nabla^0 f).
  \end{align*}
  Similarly, we have
  \begin{align*}
    &\lim_{t\downarrow 0}\frac{\l<\nabla^0 P_tf,\E \partrinv0t\nabla^t f(X_t)\r>_0-\l<\nabla^0 P_tf, \E Q_t\partrinv0t\nabla^t f(X_t)\r>_0}{t}\\
    &\quad= \lim_{t\downarrow 0}\l<\nabla^0 P_tf,\E \l[\frac{(\id-Q_t)}{t}\partrinv0t \nabla^t f(X_t)\r]\r>_0\\
    &\quad= \lim_{t\downarrow 0}\l<\nabla^0 P_tf,\E \l[\frac1t\int_0^tQ_s\,{\mathcal R}^Z_{\partr0s}\,\vd s\,\partrinv0t \nabla^t f(X_t)\r]\r>_0\\
    &\quad= \mathcal{R}_0^{Z}(\nabla^0 f, \nabla^0 f).\qedhere
  \end{align*}
\end{proof}

\begin{proof}[Proof of Theorem \ref{th3}.]
  We give the proof of the equivalence (i) and (ii),
  resp.~(ii').\smallskip

  ``(i) implies (ii) and (ii')'': By \eqref{eq-Q3}, we know that
  \begin{align*}
    &\left\|\id-\e^{\frac{1}{2}\int_s^t\left(K_1(r,X_r)+K_2(r,X_r)\right)\,\vd r}Q_{s,t}\right\|\\
    &\quad=\l\|\int_s^t\e^{\frac{1}{2}\int_s^r\left(K_1(u,X_u)+K_2(u,X_u)\right)\vd u}\,Q_{s,r}\,\l(\mathcal{R}_{\partr sr}^Z-\frac{K_1(r,X_r)+K_2(r,X_r)}{2}\id\r)\,\vd r\r\|\\
    &\quad\leq \int_s^t\e^{\frac{1}{2}\int_s^r\left(K_1(u,X_u)+K_2(u,X_u)\right)\vd u}\,\|Q_{s,r}\|\,\frac{K_2(r,X_r)-K_1(r,X_r)}{2}\,\vd r \\
    &\quad\leq \int_s^t\e^{\frac{1}{2}\int_s^r\left(K_2(u,X_u)-K_1(u,X_u)\right)\vd u}\,\frac{K_2(r,X_r)-K_1(r,X_r)}{2}\,\vd r\\
    &\quad=\e^{\frac{1}{2}\int_s^t\left(K_2(u,X_u)-K_1(u,X_u)\right)\vd u}-1.
  \end{align*}
  By a similar discussion as in the proof of Theorem \ref{th1}, we
  have
  \begin{align*}
    &\left|2a\nabla^s f+2b\nabla^sP_{s,t}f-Q_{s,t}\,\partrinv st \nabla^tf(X_t)\right|_s^2\\
    &\leq \e^{\frac{1}{2}\int_s^t\left(K_2(r,X_r)-K_1(r,X_r)\right)\vd r}\left|2a\nabla^sf+2b\nabla^sP_{s,t}f-\e^{-\frac{1}{2}\int_s^t(K_1(r,X_r)+K_2(r,X_r))\vd r}\partrinv st\nabla^tf(X_t)\right|_s^2\\
    &\quad +\e^{-\int_s^t\left(K_1(r,X_r)+K_2(r,X_r)\right)\vd r}\left(\e^{\int_s^t\left(K_2(r,X_r)-K_1(r,X_r)\right)\vd r}-\e^{\frac{1}{2}\int_s^t\left(K_2(r,X_r)-K_1(r,X_r)\right)\vd r}\right)|\nabla^tf(X_t)|^2_t\\
    &=\e^{\frac{1}{2}\int_s^t\left(K_2(r,X_r)-K_1(r,X_r)\right)\vd r}\left|2a\nabla^sf+2b\nabla^sP_{s,t}f\right|_s^2\\
    &\quad-2\e^{-\int_s^tK_1(r,X_r)\vd r}\l<2a\nabla^sf+2b\nabla^sP_{s,t}f, \partrinv st\nabla^tf(X_t)\r>_s +\e^{-2\int_s^tK_1(r,X_r)\vd r}|\nabla^tf(X_t)|^2_t
  \end{align*}
  where $a,b$ are constants such that $a+b=1$.  From this, we obtain
  \begin{align}\label{add-5}
    &\E\left|u_sQ_{s,t}u_t^{-1}\nabla^tf(X_t)\right|_s^2-\E\l[\e^{-2\int_s^tK_1(r,X_r)\vd r}|\nabla^tf(X_t)|^2_t\r]\notag\\
    &\leq \left(\E\e^{\frac{1}{2}\int_s^t(K_2(r,X_r)-K_1(r,X_r))\vd r}-1\right)|2a\nabla^sf+2b\nabla^sP_{s,t}f|_s^2\notag\\
    &\quad -2\l<2a\nabla^sf+2b\nabla^sP_{s,t}f, \E\l[\e^{-\int_s^tK_1(r,X_r)\vd r}\partrinv st\nabla^tf(X_t)\r]\r>_s\notag\\
    &\quad +2\l<2a\nabla^sf+2b\nabla^sP_{s,t}f, \nabla^s P_{s,t}f\r>_s.
  \end{align}
  Moreover, by the derivative formula (Lemma \ref{lem3}), we have
$$|\nabla^sP_{s,t}f|_s^2\leq \E|Q_{s,t}\partrinv st \nabla^tf(X_t)|_s^2$$
which combines with \eqref{add-5} implies
\begin{align*}
  &|\nabla^sP_{s,t}f|_s^2-\E\l[\e^{-2\int_s^tK_1(r,X_r)\vd r}|\nabla^tf(X_t)|^2_t\r]\\
  &\leq \left(\E\e^{\frac{1}{2}\int_s^t(K_2(r,X_r)-K_1(r,X_r))\vd r}-1\right)\left|2a\nabla^sf+2b\nabla^sP_{s,t}f\right|_s^2\\
  &\quad -2\l<2a\nabla^sf+2b\nabla^sP_{s,t}f, \E\l[\e^{-\int_s^tK_1(r,X_r)\vd r}\partrinv st\nabla^tf(X_t)\r]\r>_s\\
  &\quad +2\l<2a\nabla^sf+2b\nabla^sP_{s,t}f, \nabla^s P_{s,t}f\r>_s.
\end{align*}
Hence taking $a=1$, $b=0$ and $a=0$, $b=1$ in the above inequalities,
we complete the proof of
``$\text{(i)}\Rightarrow \text{(ii)(ii')}$''.\smallskip

``(i) $\Rightarrow$ (iii)'': \ By It\^{o}'s formula, for
$f\in C_0^{\infty}(M)$,
\begin{align*}
  \vd (P_{s,t}f^{2/p})^p(X_s)&=\vd M_s+(L_s+\partial_s)(P_{s,t}f^{2/p}(X_{s}))^p\,\vd s\notag\\
                             &=\vd M_s+p(p-1)(P_{s,t}f^{2/p}(X_{s}))^{p-2}|\nabla^s P_{s,t}f^{2/p}|_s^2(X_s)\,\vd s\notag\\
                             &=\vd M_s+p(p-1)(P_{s,t}f^{2/p}(X_{s}))^{p-2}|\nabla^s P_{s,t}f^{2/p}|_s^2(X_s)\,\vd s
\end{align*}
where $M_s$ is a local martingale. The rest of the proof then is
similar to the one of Theorem \ref{th1}; we skip the details
here.\smallskip

``$\text{(ii) and (ii')}\Rightarrow\text{(i)}$'':
\begin{align*}
  & \frac{|\nabla^sP_{s,t}f|_s^2-P_{s,t}|\nabla^t f|^2_t}{t-s}+\E^{(s,x)}\bigg[\frac{1-\e^{-2\int_s^t K_1(r,X_r)\vd r}}{t-s}\,|\nabla^t f|_t^2(X_t)\bigg]\\
  & \leq 4\bigg[\frac{\E^{(s,x)}\e^{\frac{1}{2}\int_s^t(K_2(r,X_r)-K_1(r,X_r))\vd r}-1}{t-s}|\nabla^s f|_s^2+\frac{\l<\nabla^s f, \nabla^s P_{s,t}f-\E \partrinv st\nabla^tf(X_t)\r>_s}{t-s}\\
  &\qquad-\bigg<\nabla^s f,\E^{(s,x)}\bigg[\frac{\e^{-\int_s^t K_1(r,X_r)\vd r}-1}{t-s} \partrinv st\nabla^t f(X_t)\bigg]\bigg>_s\bigg]\wedge 0;
\end{align*}
Letting $t\downarrow s$ and using Lemma \ref{lem2} (i) (iv), we have
\begin{align*}
  -2&\mathcal{R}_s^{Z}(\nabla^s f, \nabla^s f)+2K_1(s,x)|\nabla^s f|_s^2\\
    &\leq 4\l[\frac{1}{2}(K_2(s,x)-K_1(s,x))|\nabla^s f|_s^2-\mathcal{R}_s^Z(\nabla^s f, \nabla^s f)+K_1(s,x)|\nabla^s f|^2_s\r]\wedge 0,
\end{align*}
that is
 $$K_1(s,x)|\nabla^s f|_s^2(x)\leq{\mathcal{R}_s^Z(\nabla^sf, \nabla^sf)}(x)\leq K_2(s,x)|\nabla^s f|_s^2(x).$$
 Similarly, (ii') implies (i) as well. We skip the details here.
\end{proof}

Based on our characterizations for pinched curvature on evolving
manifolds, we can define (weak) solutions to some geometric flows.

\begin{corollary}\label{cor1}
  Let $(t,x)\mapsto K(t,x)$ be some function on $[0,T_c)\times M$.
  The following statements are equivalent to each other:
  \begin{enumerate}[\rm(i)]
  \item [\rm(i)] the family $(M, g_t)_{t\in [0,T_c)}$ evolves by
 $$\frac{1}{2}\partial_tg_t=\Ric_t-\nabla^tZ_t-K(t,\cdot)g_t, \quad t\in [0,T_c);$$
\item [\rm(ii)] for $f\in C_0^{\infty}(M)$ and $0\leq s\leq t<T_c$,
  \begin{align*}
    & |\nabla^sP_{s,t}f|_s^2-\E^{(s,x)}\l[\e^{-2\int_s^tK(r,X_r)\vd r}|\nabla^t f|_t^2(X_t)\r]\\
    & \leq 4\bigg[\l<\nabla^s f, \nabla^s P_{s,t}f\r>_s-\l<\nabla^s f,\E^{(s,x)}\l[\e^{-\int_s^tK(r,X_r)\vd r}\partrinv st\nabla^t f(X_t)\r]\r>_s\bigg]\wedge 0;
  \end{align*}
\item [\rm(ii')] for $f\in C_0^{\infty}(M)$ and $0\leq s\leq t<T_c$,
  \begin{align*}
    &|\nabla^sP_{s,t}f|_s^2-\E^{(s,x)}\l[\e^{-2\int_s^tK(r,X_r)\vd r}|\nabla^t f|_t^2(X_t)\r]\\
    &\leq 4\bigg[|\nabla^s P_{s,t}f|_s^2-\l<\nabla^s P_{s,t}f, \E^{(s,x)}\l[\e^{-\int_s^tK(r,X_r)\vd r}\partrinv st\nabla ^t f(X_t)\r]\r>_s\bigg]\wedge 0;
  \end{align*}
\item [\rm(iii)] for $f\in C_0^{\infty}(M)$, $p\in (1,2]$ and
  $0\leq s\leq t<T_c$,
  \begin{align*}
    &\frac{p(P_{s,t}f^2-(P_{s,t}f^{2/p})^p)}{4(p-1)}-\E^{(s,x)}\l[\int_s^t\e^{-2\int_r^tK(\tau,X_{\tau})\vd \tau}\vd r\times|\nabla^t f|_t^2(X_t)\r]\\
    &\quad \leq 4\int_s^t\E^{(s,x)}\l<\nabla^r f(X_r),\nabla^r P_{r,t}f(X_r)-\e^{-\int_r^tK(\tau, X_{\tau})\vd \tau}\partrinv rt\nabla^t f(X_t)\r>_r\vd r\wedge 0;
  \end{align*}
\item [\rm(iii')] for $f\in C_0^{\infty}(M)$, $p\in (1,2]$ and
  $0\leq s\leq t<T_c$,
  \begin{align*}
    & \frac{p(P_{s,t}f^2-(P_{s,t}f^{2/p})^p)}{4(p-1)}-\E^{(s,x)}\l[\int_s^t\e^{-2\int_r^tK(\tau,X_{\tau})\vd \tau}\vd r\times |\nabla^t f|_t^2(X_t)\r]\\
    &\quad\leq 4\int_s^tP_{s,r}|\nabla^r P_{r,t}f|_r^2-\E^{(s,x)}\l<\nabla^r P_{r,t}f(X_r),\e^{-\int_r^tK(\tau, X_{\tau})\vd \tau}\partrinv rt\nabla^t f(X_t)\r>_r\,\vd r\wedge 0;
  \end{align*}
\item [\rm(iv)] for $f\in C_0^{\infty}(M)$ and $0\leq s\leq t<T_c$,
  \begin{align*}
    & \frac{1}{4}\left(P_{s,t}(f^2\log f^2)-P_{s,t}f^2\log P_{s,t}f^2\right)-\E^{(s,x)}\l[\int_s^t\e^{-2\int_r^tK(\tau,X_{\tau})\vd \tau}\vd r\times|\nabla^t f|_t^2(X_t)\r]\\
    & \leq 4\int_s^t\E^{(s,x)}\l<\nabla^r f(X_r),\nabla^r P_{r,t}f(X_r)-\e^{-\int_r^tK(\tau, X_{\tau})\vd \tau}\partrinv rt\nabla^t f(X_t)\r>_r\vd r\wedge 0;
  \end{align*}
\item [\rm(iv')] for $f\in C_0^{\infty}(M)$ and $0\leq s\leq t<T_c$,
  \begin{align*}
    & \frac{1}{4}(P_{s,t}(f^2\log f^2)-P_{s,t}f^2\log P_{s,t}f^2)-\E^{(s,x)}\l[\int_s^t\e^{-2\int_r^tK(\tau,X_{\tau})\vd \tau}\vd r\times|\nabla^t f|_t^2(X_t)\r]\\
    &\leq 4\int_s^t\l[P_{s,r}|\nabla^r P_{r,t}f|_r^2-\E^{(s,x)}\l<\nabla^r P_{r,t}f(X_r),\e^{-\int_r^tK(\tau, X_{\tau})\vd \tau}\partrinv rt\nabla^t f(X_t)\r>_r\r]\,\vd r\wedge 0.
  \end{align*}
\end{enumerate}
\end{corollary}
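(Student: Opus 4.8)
The plan is to obtain Corollary \ref{cor1} directly as the degenerate case $K_1=K_2=K$ of Theorem \ref{th3}. The starting observation is that, by the definition
$$\mathcal{R}_t^Z(X,Y)=\Ric_t(X,Y)-\l<\nabla^t_XZ_t,Y\r>_t-\tfrac12\partial_tg_t(X,Y),$$
the evolution equation in (i) --- that is, $\tfrac12\partial_tg_t=\Ric_t-\nabla^tZ_t-K(t,\cdot)g_t$ --- holds if and only if $\mathcal{R}_t^Z(X,X)=K(t,x)|X|_t^2$ for every $X\in T_xM$ and $(t,x)\in[0,T_c)\times M$; equivalently, if and only if the pinching $K\le \mathcal{R}^Z\le K$ of Theorem \ref{th3}(i) holds with $K_1=K_2=K$. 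In particular $\mathcal{R}^Z_t\ge K$, so the integrability requirement \eqref{eq13}, read with $K_1$ replaced by $K$, is exactly the hypothesis under which Theorem \ref{th3} is available; we take it to be in force (it is satisfied, e.g., under the conditions of Remark \ref{rem5}).

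I would then substitute $K_1=K_2=K$ into each of the functional inequalities (ii)--(iv') of Theorem \ref{th3}. The only terms affected are those built from $K_2-K_1$, and they all trivialise:
$$\E^{(s,x)}\e^{\frac12\int_s^t(K_2(r,X_r)-K_1(r,X_r))\,\vd r}-1=0,\qquad \e^{\frac12\int_r^t(K_2(\tau,X_\tau)-K_1(\tau,X_\tau))\,\vd\tau}=1,$$
and likewise $\E^{(s,x)}\e^{\frac12\int_r^t(K_2(\tau,X_\tau)-K_1(\tau,X_\tau))\,\vd\tau}=1$. With these identities inserted, Theorem \ref{th3}(ii) loses its first bracketed summand and becomes verbatim Corollary \ref{cor1}(ii); in the same way (ii') passes to (ii'), (iii) to (iii), (iii') to (iii'), (iv) to (iv), and (iv') to (iv'), since every surviving ingredient --- the weights $\e^{-2\int K_1}$ and $\e^{-\int K_1}$, the terms $P_{s,r}|\nabla^rP_{r,t}f|_r^2$, the parallel transports $\partrinv rt$, and the truncation $\wedge\,0$ --- depends only on $K=K_1$ and is left untouched. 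Because Theorem \ref{th3} asserts the mutual equivalence of its statements (i)--(iv'), this specialisation immediately yields the mutual equivalence of statements (i)--(iv') of the corollary.

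No genuine analytic obstacle arises beyond what is already packaged in Theorem \ref{th3}; the argument is essentially bookkeeping. The two points that require a little attention are: (a) that the integrability hypothesis \eqref{eq13} be transported correctly (with $K_1$ replaced by $K$), this being what allows the derivative formula of Lemma \ref{lem3} and the characterisation formulae of Lemma \ref{lem2} to be applied; and (b) that, exactly as in Remark \ref{rem1}, the log-Sobolev forms (iv) and (iv') be understood as the limits $p\downarrow 1$ of (iii) and (iii') respectively, so that they need no separate verification. I would close by recording the special case $K\equiv 0$, which characterises the flow $\tfrac12\partial_tg_t=\Ric_t-\nabla^tZ_t$ --- and, when $Z_t\equiv 0$, the Ricci flow --- through gradient, $L^p$- and entropy inequalities for the time-inhomogeneous semigroup $P_{s,t}$ alone.
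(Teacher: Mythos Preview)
Your proposal is correct and coincides with the paper's intended argument: the paper states Corollary~\ref{cor1} without a separate proof, leaving it as the immediate specialisation $K_1=K_2=K$ of Theorem~\ref{th3}, which is exactly the reduction you carry out. Your explicit identification of the flow equation in (i) with the pinching $K\le\mathcal{R}^Z\le K$, together with the observation that all $K_2-K_1$ terms vanish, is precisely the bookkeeping the paper omits.
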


\begin{remark}\label{rem4}
  In Corollary \ref{cor1}, if $Z_t\equiv 0$ and $K \equiv 0$, the
  results characterize solutions to the Ricci flow, see \cite{HN} for
  functional inequalities on path space characterizing Ricci flow.
\end{remark}

\proof[Acknowledgments]This work has been supported by Fonds National
de la Recherche Luxembourg (Open project O14/7628746 GEOMREV). The
first named author acknowledges support by NSFC (Grant No.~A011002)
and Zhejiang Provincial Natural Science Foundation of China (Grant
No. LQ16A010009).

\bibliographystyle{amsplain}%

\bibliography{CT16a}
\end{document}